\renewcommand\showkeyslabelformat[1]{}
\renewcommand{\todo}[1]{}
\newtheorem{theorem}{Theorem}[section]
\newtheorem{lemma}[theorem]{Lemma}
\newtheorem*{lemma*}{Lemma}
\newtheorem{remark}[theorem]{Remark}
\numberwithin{equation}{section}
\newcommand{\labitem}[2]{%
\def\@itemlabel{\textbf{#1}}
\item
\def\@currentlabel{#1}\label{#2}}
\newcommand{\norm}[1]{\left\|{#1}\right\|}
\newcommand{\abs}[1]{\left|{#1}\right|}
\newcommand{\rkla}[1]{{\left(#1\right)}}
\newcommand{\trkla}[1]{{(#1)}}
\newcommand{\gkla}[1]{{\left\{#1\right\}}}
\newcommand{\tgkla}[1]{{\{#1\}}}
\newcommand{\ekla}[1]{{\left[#1\right]}}
\newcommand{\tekla}[1]{{[#1]}}
\newcommand{\tabs}[1]{|{#1}|}
\newcommand{\bs}[1]{\boldsymbol{#1}}
\newcommand{\mb}[1]{\mathbf{#1}}
\newcommand{\iOmegaT}{{\int_{\Omega_T}}}
\newcommand{\iOmega}{\int_{\Omega}}
\newcommand{\para}[1]{\partial _{#1}}
\newcommand{\dtau}{\para{\tau}^-}
\renewcommand{\div}{\operatorname{div}}
\newcommand{\nablaX}{\nabla_{\bs{X}}}
\newcommand{\eps}{\varepsilon}
\newcommand{\mol}[1][\empty]{
  \ifthenelse{\equal{#1}{\empty}}
  {\mathcal{J}_{\eps}}
  {\mathcal{J}_{\eps}\gkla{#1}}
}
\newcommand{\molb}[1][\empty]{
  \ifthenelse{\equal{#1}{\empty}}
  {\bs{\mathcal{J}}_{\eps}}
  {\bs{\mathcal{J}}_{\eps}\gkla{#1}}
}
\newcommand{\molbb}[1][\empty]{
  \ifthenelse{\equal{#1}{\empty}}
  {\bs{\mathfrak{J}}_{\eps}}
  {\bs{\mathfrak{J}}_{\eps}\gkla{#1}}
}
\newcommand{\molh}[1][\empty]{
  \ifthenelse{\equal{#1}{\empty}}
  {\mathcal{J}_{h}}
  {\mathcal{J}_{h}\gkla{#1}}
}
\newcommand{\molhb}[1][\empty]{
  \ifthenelse{\equal{#1}{\empty}}
  {\bs{\mathcal{J}}_{h}}
  {\bs{\mathcal{J}}_{h}\gkla{#1}}
}
\newcommand{\molhbb}[1][\empty]{
  \ifthenelse{\equal{#1}{\empty}}
  {\bs{\mathfrak{J}}_{h}}
  {\bs{\mathfrak{J}}_{h}\gkla{#1}}
}
\newcommand{\nn}{^{n}}
\newcommand{\no}{^{n-1}}
\newcommand{\nb}{\nn}
\newcommand{\n}{n}
\newcommand{\tl}{^{\tau}}
\newcommand{\tp}{^{\tau,+}}
\newcommand{\tm}{^{\tau,-}}
\newcommand{\tpm}{^{\tau,(\pm)}}
\newcommand{\uflow}{\mb{u}}
\newcommand{\restr}[2]{\ensuremath{
  \left.\kern-\nulldelimiterspace 
  #1 
  \vphantom{\big|} 
  \right|_{#2} 
  }}
\newcommand{\xu}{\bs{x}_{\bs{u}}}
\newcommand{\xv}{\bs{x}_{\bs{v}}}
\newcommand{\yu}{\bs{y}_{\bs{u}}}
\newcommand{\yv}{\bs{y}_{\bs{v}}}
\newcommand{\yut}{\tilde{\bs{y}}_{\bs{u}}}
\newcommand{\yvt}{\tilde{\bs{y}}_{\bs{v}}}
\newcommand{\Fu}{\mathds{F}_{\bs{u}}}
\newcommand{\Fv}{\mathds{F}_{\bs{v}}}
\newcommand{\Ev}{\mathds{E}_{\bs{v}}}
\newcommand{\Lag}{\bs{X}}
\newcommand{\iOmegaut}{\int_{\Omega_{\bs{u}}^t}}
\newcommand{\iOmegavt}{\int_{\Omega_{\bs{v}}^t}}
\newcommand{\iOmegazero}{\int_{\Omega_0}}
\newcommand{\varxu}{\delta_{\xu}}
\newcommand{\varxv}{\delta_{\xv}}
\newcommand{\Omegaut}{\Omega_{\bs{u}}^t}
\newcommand{\Omegavt}{\Omega_{\bs{v}}^t}
\newcommand{\Omegazero}{\Omega_0}
\newcommand{\wu}{\bs{w}_{\bs{u}}}
\newcommand{\wv}{\bs{w}_{\bs{v}}}
\newcommand{\varu}{\delta_{\bs{u}}}
\newcommand{\varv}{\delta_{\bs{v}}}
\newcommand{\fconsu}{\bs{f}_{\bs{u}}^{\text{cons}}}
\newcommand{\fconsv}{\bs{f}_{\bs{v}}^{\text{cons}}}
\newcommand{\fdissu}{\bs{f}_{\bs{u}}^{\text{diss}}}
\newcommand{\fdissv}{\bs{f}_{\bs{v}}^{\text{diss}}}
\newcommand{\per}{{\operatorname{per}}}
\newcommand{\Hdivper}{H^1_{\div,\per}\trkla{\Omega}}
\newcommand{\Hdivpermean}{\dot{H}^1_{\div,\per}\trkla{\Omega}}
\begin{document}
 \title[]{On a novel approach for modeling liquid crystalline flows}
\date{\today}
\author[S.~Metzger]{Stefan Metzger}
\address{Department of Applied Mathematics, Illinois Institute of Technology, Chicago IL, 60616, USA}
\email{smetzger2@iit.edu}

\keywords{nematic flow, Ericksen--Leslie, Navier--Stokes, energetic variational approach, existence of weak solutions, non-Newtonian fluids}
\subjclass[2010]{35Q35,76D05, 76D03,76A05}
%
%

\maketitle

          \begin{abstract}
 In this paper, we derive a new model for the description of liquid crystalline flows.
While microscopic Doi type models suffer from the high dimensionality of the underlying product space, the more macroscopic Ericksen--Leslie type models describe only the long time behavior of the flow and are valid only close to equilibrium.
By applying an energetic variational approach, we derive a new macroscopic model which shall provide an improved description far from equilibrium.
The novelty of our approach lies in the way the energy is minimized. 
Distinguishing between the velocities of particles and fluid allows us to define the energy dissipation not in terms of chemical potentials but in terms of friction induced by the discrepancies in the considered velocities.
We conclude this publication by establishing the existence of weak solutions to the newly derived model.
          \end{abstract}

\section{Introduction}
Liquid crystal is often viewed as an intermediate state between liquid and solid.
It possesses orientational order, but none or only partial positional order.
In this manuscript, we are concerned with the so called nematic state, where the molecules exhibit only orientational order, while floating around freely.

For the description of nematic phases, there are different approaches at hand which differ in accuracy, complexity, and underlying assumptions (see \cite{Emmrich18} for a comprehensive overview).
In the most accurate description, the so called Doi or Doi--Hess model \cite{doi83,doi88,beris94,hess15}, the orientation of liquid crystalline molecules is described by a distribution function $f\trkla{\bs{x},\bs{q},t}$, which describes the probability density that at time $t$ a molecule at point $\bs{x}$ is aligned in direction $\bs{q}\in\mathds{S}$ ($\mathds{S}$ denotes the unit sphere).
The key ingredient of this approach is a Fokker--Planck equation for $f$, which is coupled with a momentum balance equation providing a solenoidal fluid velocity $\bs{u}$. 
In a generic Doi model (cf. \cite{Emmrich18}), this Fokker--Planck equation reads
\begin{align}\label{eq:intro:FP}
\para{t} f + \nabla\cdot\rkla{\bs{u}f} +\mathcal{R}\cdot\rkla{\bs{q}\times\nabla\bs{u}\bs{q}f} = \nabla\cdot\rkla{f\nabla\mu_f} +\mathcal{R}\cdot\rkla{f\mathcal{R}\mu_f}\,.
\end{align}
Here, $\mu_f$ denotes the chemical potential, i.e. the first variation of a suitable free energy with respect to $f$, and $\mathcal{R}:=\bs{q}\times\tfrac{\partial}{\partial\bs{q}}$ denotes the gradient with respect to $\bs{q}$ restricted to the sphere.
Typically, the energy of the system consists of the kinetic energy, an entropic component, and terms describing the effects of alignment.
A suitable energy (see e.g. \cite{e06}) reads e.g.
\begin{align}\label{eq:energy:doihess}
\begin{split}
\mathcal{E}_{\text{DH}}\trkla{\bs{u},f}=&\iOmega\tfrac12\rho\abs{\bs{u}}^2d\bs{x}+\iOmega\int_{\mathds{S}} f\trkla{\bs{x},\bs{q},t}\trkla{\log f\trkla{\bs{x},\bs{q},t}-1}d\bs{q} d\bs{x}\\
&+\iOmega\int_{\mathds{S}}\iOmega\int_{\mathds{S}}\chi\trkla{\bs{x}-\hat{\bs{x}}}\beta\trkla{\bs{q},\hat{\bs{q}}} f\trkla{\hat{\bs{x}},\hat{\bs{q}},t} d\hat{\bs{q}}d\hat{\bs{x}} f\trkla{\bs{x},\bs{q},t} d\bs{q}d\bs{x}\,.
\end{split}
\end{align}
Here, $\rho$ is the constant mass density, $\chi$ denotes a suitable mollifier modeling the range of interaction of the molecules and $\beta$ is an integral kernel describing interaction of two molecules pointing in the directions $\bs{q}$ and $\hat{\bs{q}}$.
A common choice for this interaction kernel introduced by Maier and Saupe \cite{MaierSaupe1958} reads $\beta\trkla{\bs{q},\hat{\bs{q}}}\sim-\trkla{\bs{q}\cdot\hat{\bs{q}}}^2$.
The energy proposed in \eqref{eq:energy:doihess} can also be extended by additional terms to include the influence of external (e.g. magnetic) fields (cf. \cite{Emmrich18} and the references therein).\\
A common simplification consists of studying only the lowest nonvanishing moment of $f$ (cf. \cite{deGennes95}), i.e. the so called $Q$-tensor which is defined as
\begin{align}
\bs{Q}\trkla{\bs{x},t}:=\int_{\mathds{S}} \rkla{\bs{q}\otimes\bs{q}-\tfrac13\mathds{1}} f\trkla{\bs{x},\bs{q},t} d\bs{q}\,.
\end{align}
This purely macroscopic quantity depends only on the spatial coordinate $\bs{x}$.
Under the additional assumption of an uniaxial state, one may describe the nematic phase by a normalized vector $\bs{d}$ called director.
This approach dates back to the publications of Ericksen and Leslie \cite{Ericksen61, Ericksen69, Leslie68, Leslie92}.
\todo{ab hier neu}
They proposed a director model of the form
\begin{align}\label{eq:director:full}
\rho_1\frac{\text{d}^2\bs{d}}{\text{d}t^2}+\bs{\mu}+c_1\rkla{\para{t}\bs{d}+\rkla{\bs{u}\cdot\nabla}\bs{d}+\rkla{\bs{W}\bs{u}}^T\bs{d}}+c_2\rkla{\bs{Du}}\bs{d}+\lambda\bs{d}=0\,.
\end{align}
Thereby, $\bs{W}$ and $\bs{D}$ denote the skew-symmetric and the symmetric part of the gradient.
$\tfrac{\text{d}^2\bs{d}}{\text{d}t^2}$ is the second material derivative, the coefficients $c_1$ and $c_2$ reflect the molecular shape (cf. \cite{Jeffery1922}), and $\lambda$ is the Lagrange multiplier to the length constraint $\abs{\bs{d}}\equiv1$.
The chemical potential $\bs{\mu}$ is again the first variation of a suitable free energy with respect to $\bs{d}$. 
Common choices for the free energy are the Oseen--Franck free energy density (cf. \cite{Franck1958})
\begin{align}\label{eq:energy:oseenfranck}
k_1\trkla{\div\bs{d}}^2+k_2\trkla{\bs{d}\cdot\nabla\times\bs{d}+q}+k_3\abs{\bs{d}\times\nabla\times\bs{d}}^2 +\alpha\trkla{\operatorname{tr}\trkla{\nabla\bs{d}}^2-\trkla{\div\bs{d}}^2}\,,
\end{align}
where $k_i$ ($i = 1, 2, 3$), $\alpha$ and $q$ denote elastic constants, and its one constant approximation $\tfrac12\abs{\nabla\bs{d}}^2$.
Although there are recent analytical results for \eqref{eq:director:full} (cf. \cite{Chechkin2016, Chen2019arxiv}), most results were achieved for a simplified version of \eqref{eq:director:full}.
In particular, the constant $\rho_1$, which is related to the moment of inertia, is assumed to be very small allowing to neglect the first term in \eqref{eq:director:full}.
In addition, the length constraint $\tabs{\bs{d}}\equiv1$ is often approximated by adding a penalty term of the form
\begin{align}
W\trkla{\bs{d}}=\tfrac{1}{4\gamma}\trkla{\abs{\bs{d}}^2-1 }^2\label{eq:def:doublewell}
\end{align}
with $0<\gamma<\!\!\!<1$ to the free energy density.
This approach was introduced by Lin and Liu in \cite{LinLiu95} for a further simplified model, where the director equation reads
\begin{align}
\para{t}\bs{d}+ \rkla{\bs{u}\cdot\nabla}\bs{d}=-\bs{\mu}\,.
\end{align}
Existence results for \eqref{eq:director:full} with $\rho_1=c_2=0$ and the aforementioned approximation of the length constraint can be found in \cite{Lin2000} and in \cite{CavaterraRoccaWu2013} for the case $c_2\neq0$.
For further analytical results like the connection between Parodi's relation and the well-posedness of the system or its long-time behavior can be found in \cite{WuXuLiu2013} and \cite{Petzeltova2013}, respectively.
In the recent years, Ericksen--Leslie type models were also extended to the non-isothermal case (cf.\cite{Feireisl2010,Hieber2018,DeAnnaLiu2019}).
For stochastic Ericksen--Leslie equations, we refer to \cite{Brzezniak2019} and the references therein.
A more extensive survey can be found in \cite{Emmrich18}.\\
The above presented approaches differ not only in their complexity, but also in their theoretical justification.
While the Doi--Hess model arises from molecular kinetic theory, the $Q$-tensor theory and the Ericksen--Leslie theory are phenomenological \cite{Han15} and rely on phenomenological parameters which are often hard to determine from experimental results.
Therefore, the connection of the later theories to the Doi--Hess model was intensely studied in the recent years (cf. \cite{e06,Han15,WangZhangZhang2015,doi83}).
As it turned out, the $Q$-tensor theory and the Ericksen--Leslie theory can be derived from the Doi--Hess description under certain assumptions like that the system is close to equilibrium (cf. \cite{Han15}) or that the Deborah number is vanishing (cf. \cite{e06,WangZhangZhang2015}), i.e. the observation time is significantly larger than the relaxation time.
Both assumptions indicate that this connection between the director models and the Doi--Hess model holds only true when the system is very close to equilibrium.\\
From the mathematical point of view, the Doi--Hess approach and the director description show  major structural differences.
The Fokker--Planck equation appearing in the Doi--Hess description can be written as a generalized balance equation, where the distribution function $f$ is transported by the velocity $\bs{u}-\nabla\mu_f$ (see also the model derivation in \cite{GrunMetzger16}). 
Equations of such structure can be derived using an energetic variational approach (\textit{EnVarA}), which shall be explained in Section \ref{sec:derivation} in more detail.
However, in the typical director equation
\begin{align}\label{eq:intro:director}
\para{t}\bs{d}+\trkla{\bs{u}\cdot\nabla}\bs{d}-\nabla\bs{u}\cdot\bs{d}=-\bs{\mu}\,,
\end{align}
only the left-hand side which describes the deformation induced by the fluid velocity can be obtained via the \textit{EnVarA}.
The right-hand side, which is the negative $L^2$-gradient of the free energy $\iOmega\tfrac12\tabs{\nabla\bs{d}}^2+\iOmega W\trkla{\bs{d}}$, is responsible for the energy minimization, but can not be written as a transport term. 
Consequently, it is not possible to derive \eqref{eq:intro:director} via the \textit{EnVarA}.
Therefore, both models use different mechanisms for the minimization of the free energy. In the Doi--Hess model, the energy enters as an additional contribution to the particle velocity, while the free energy of the director field enters the evolution equation only via an additional $L^2$-gradient.
This might indicate, why assumptions like a long observation time are needed to connect these models.

The aim of this manuscript is to overcome that issue and combine the advantages, i.e. the accuracy of \eqref{eq:intro:FP} and the simplistic description of \eqref{eq:intro:director}.
Therefore, we derive a director model which mimics the behavior of \eqref{eq:intro:FP} in the sense that the chemical potential extends the transport velocity by an additional term.
The main tool for the derivation of the new model is an energetic variational approach (cf. \cite{HyonKwakLiu2010}).
The model describes the evolution of the director field $\bs{d}$ and the velocity $\bs{u}$ in a domain $\Omega\subset\mathds{R}^d$ ($d\in\tgkla{2,3}$) with periodic boundary conditions via
\begin{subequations}\label{eq:model:intro}
\begin{multline}
\para{t}\bs{d}+\rkla{\ekla{\bs{u}+\bs{\mu}\cdot\nabla\bs{d}+\alpha\div\gkla{\bs{\mu}\otimes\bs{d}}-\trkla{1-\alpha} \div\gkla{\bs{d}\otimes\bs{\mu}}}\cdot\nabla}\bs{d}\\
-\alpha\nabla\ekla{\bs{u}+\bs{\mu}\cdot\nabla\bs{d}+\alpha\div\gkla{\bs{\mu}\otimes\bs{d}} -\trkla{1-\alpha} \div\gkla{\bs{d}\otimes\bs{\mu}}}\cdot\bs{d}\\
+\trkla{1-\alpha}\nabla^T\ekla{\bs{u}+\bs{\mu}\cdot\nabla\bs{d}+\alpha\div\gkla{\bs{\mu}\otimes\bs{d}} -\trkla{1-\alpha} \div\gkla{\bs{d}\otimes\bs{\mu}}}\cdot\bs{d}=0\,,\label{eq:model:intro:d}
\end{multline}
\begin{align}
\bs{\mu}=-\Delta\bs{d}+\bs{f}\trkla{\bs{d}}\label{eq:model:intro:mu}\,,
\end{align}
\begin{multline}
\rho\rkla{\para{t}\bs{u}+\rkla{\bs{u}\cdot\nabla}\bs{u}}+\nabla p-\div\gkla{2\eta\mb{D}\bs{u}}\\-\bs{\mu}\cdot\nabla\bs{d}-\alpha\div\gkla{\bs{\mu}\otimes\bs{d}}+\trkla{1-\alpha}\div\gkla{\bs{d}\otimes\bs{\mu}}=0\,,\label{eq:model:intro:u}
\end{multline}
\begin{align}
\div\bs{u}=0\,,\label{eq:model:intro:div}
\end{align}
\end{subequations}
where $\bs{f}$ denotes the variation of $W$ with respect to $\bs{d}$.

The structure of this manuscript is as follows.
In Section \ref{sec:derivation}, we derive the governing equations \eqref{eq:model:intro} by applying an energetic variational approach.
The resulting model is analyzed in Section \ref{sec:existence}. After introducing the used function spaces, we establish the existence of weak solutions to a regularized version of \eqref{eq:model:intro}.
\section{Derivation of the model}
\label{sec:derivation}
In this section, we will make use of an energetic variational approach (\emph{EnVarA}, cf. \cite{HyonKwakLiu2010}) to derive the director model \eqref{eq:model:intro}.
This approach combines the least action principle, which yields an expression for the conservative forces via the variation of the action functional, and the maximum dissipation principle, which allows to determine the dissipative forces.
The key ingredient for the description of the evolution of the system is the flow map $\xv\trkla{\Lag,t}$, which tracks the position $\xv$ (also referred as Eulerian coordinate) of a material point denoted by the Lagrangian coordinate $\Lag$.
For a given velocity field $\bs{v}\trkla{\xv,t}$, the flow map is defined via
\begin{align}\label{eq:def:flowmap:v}
\para{t}\xv\trkla{\Lag,t}=\bs{v}\trkla{\xv\trkla{\Lag,t},t}\,,\qquad\text{with } \xv\trkla{\Lag,0}=\Lag\,.
\end{align}
This flow map gives rise to the associated deformation tensor $\Fv$ which is given by
\begin{align}\label{eq:def:Fv}
\trkla{\Fv}_{i,j}:=\frac{\partial\trkla{\xv}_i}{\partial\Lag_j}\,.
\end{align}
Without ambiguity, we define $\Fv\trkla{\xv\trkla{\Lag,t},t}=\Fv\trkla{\Lag,t}$.
This allows us to derive the following evolution equations for $ \Fv\trkla{\xv,t}$ and $\Fv^{-T}\trkla{\xv,t}$ (cf. \cite{gurtin1981,larson1998}).
\begin{subequations}
\begin{align}
\para{t}\Fv+\rkla{\bs{v}\cdot\nabla}\Fv&=\nabla\bs{v}\Fv\,,\\
\para{t}\Fv^{-T}+\rkla{\bs{v}\cdot\nabla}\Fv^{-T}&=-\nabla^T\bs{v}\Fv^{-T}\,.
\end{align}
\end{subequations}
The evolution of the director field $\bs{d}$ for the case of general ellipsoid shaped liquid crystal molecules depends on the spatial deformation and can be represented by 
\begin{align}
\bs{d}\trkla{\xv\trkla{\Lag,t},t}=\Ev\trkla{\xv\trkla{\Lag,t},t}\bs{d}_0\trkla{\Lag}
\end{align}
with $\bs{d}_0$ denoting the initial configuration (cf. \cite{Sun09}).
Thereby, the deformation tensor $\Ev$ carries all the information of micro structures and configurations and satisfies the transport equation\todo{Herleitung}
\begin{align}\label{eq:evo:director}
\begin{split}
\para{t}\Ev+\trkla{\bs{v}\cdot\nabla}\Ev&=\ekla{\alpha\nabla\bs{v}-\trkla{1-\alpha}\nabla^T\bs{v}}\Ev\\
&=\mb{W}\bs{v}\Ev+\trkla{2\alpha-1}\mb{D}\bs{v}\Ev\,,
\end{split}
\end{align}
where $\mb{W}$ denotes the skew-symmetric part $\tfrac12\trkla{\nabla-\nabla^T}$ of the gradient and the symbol $\mb{D}:=\tfrac12\trkla{\nabla+\nabla^T}$ denotes its symmetric part (cf. \cite{Jeffery1922}).

As different types of molecules contribute differently to the energy of the system and therefore experience different driving forces, we also allow for different velocity fields.
In particular, we distinguish between the more macroscopic fluid velocity $\bs{u}$, which can be seen as a background flow, and the more microscopic, molecule based particle velocity $\bs{v}$. 
Differences between these velocity fields cause friction which we will include in our model via dissipation (cf. Section \ref{subsec:dissipation}).
This approach was applied successfully i.e. in \cite{XuShengLiu2014} and \cite{LiuWuLiu2018} to derive Poission--Nernst--Planck systems comprising several different ion types.
Adapting these ideas, we allow consider two velocities and consequently two different flow maps.
In addition to the flow map $\xv$ describing the evolution of the directors, we consider a second flow map $\xu$ for the description of the surrounding fluid.
This additional flow map and its associated deformation tensor $\Fu$ are defined analogously to \eqref{eq:def:flowmap:v} and \eqref{eq:def:Fv}, i.e. we have
\begin{align}
\para{t}\xu\trkla{\Lag,t}=\bs{u}\trkla{\xu\trkla{\Lag,t},t}\,,&\qquad\text{with } \xu\trkla{\Lag,0}=\Lag\,,\\
\para{t}\Fu+\rkla{\bs{u}\cdot\nabla}\Fu=\nabla\bs{u}\Fu\,.&
\end{align}
Assuming that the fluid is incompressible, the deformation tensor $\Fu$ additionally satisfies the constraint $\det\Fu=1$ (cf. \cite{Sun09}).
In the following, we will use \emph{EnVarA} to derive \eqref{eq:model:intro} from the basic energy law
\begin{align}\label{eq:energy:basic}
\frac{d\mathcal{E}^{\text{tot}}}{dt}=-\mathcal{D}\,,
\end{align}
where $\mathcal{E}^{\text{tot}}=\mathcal{E}^{\text{kin}}+\mathcal{E}^{\text{int}}$ is the total energy consisting of the kinetic energy and the free (internal) energy of the crystalline molecules.
The change of the total energy is balanced by the total dissipation $\mathcal{D}$.
In conformity with Onsager's linear response assumption, we assume that $\mathcal{D}$ is a linear combination of squares of various rate functions (\cite{HyonKwakLiu2010,Onsager1931a,Onsager1931}).\\
In this manuscript, we investigate a periodic setting, i.e. it suffices to consider only one periodic cell $\Omega\subset\mathds{R}^d$ ($d=2,3$).
Without loss of generality, we may assume that $\Omega$ is a translation of the unit cube $\trkla{0,1}^d$, i.e. $\bs{u}\trkla{\bs{x},t}=\bs{u}\trkla{\bs{x}+k\bs{e}_i,t}$, $\bs{v}\trkla{\bs{x},t}=\bs{v}\trkla{\bs{x}+k\bs{e}_i,t}$, and $\bs{d}\trkla{\bs{x},t}=\bs{d}\trkla{\bs{x}+k\bs{e}_i,t}$ for all $k\in\mathds{Z}$, where the vectors $\bs{e}_1,...,\bs{e}_d$ are the canonical basis of $\mathds{R}^d$.
The reference cell which is used for the Lagrangian description is denoted by $\Omegazero:=\trkla{0,1}^d$.
Furthermore, we introduce $\Omegaut:=\tgkla{\bs{x}\in\mathds{R}^d\,:\, \bs{x}=\xu\trkla{\Lag,t}\text{ with } \Lag\in\Omegazero}$ and  $\Omegavt:=\tgkla{\bs{x}\in\mathds{R}^d\,:\, \bs{x}=\xv\trkla{\Lag,t}\text{ with } \Lag\in\Omegazero}$.
At this point, we want to emphasize that in general $\Omegaut\neq\Omegavt$.
\subsection{Applying the least action principle}
As we consider the energy density as a quantity bound to material points and moving with them, we define the energy on two domains associated with the evolution of $\Omegazero$ despite our periodicity assumption.
In particular, the total energy $\mathcal{E}^{\text{tot}}=\mathcal{E}^{\text{kin}}+\mathcal{E}^{\text{int}}$ is defined via
\begin{align}
\mathcal{E}^{\text{kin}}&=\iOmegaut\tfrac12\rho\abs{\bs{u}\trkla{\xu,t}}^2d\xu\,,\label{eq:kineticenergy}\\
\mathcal{E}^{\text{int}}&=\iOmegavt\tfrac12\abs{\nabla_{\xv}\bs{d}\trkla{\xv,t}}^2d\xv+\iOmegavt W\trkla{\bs{d}\trkla{\xv,t}} d\xv\,,\label{eq:internalenergy}
\end{align}
i.e. the kinetic energy $\mathcal{E}^{\text{kin}}$ with the constant mass density $\rho$ is given in terms of the fluid flow map $\xu$, while the internal energy $\mathcal{E}^{\text{int}}$, which includes the penalty term from \eqref{eq:def:doublewell}, is considered in terms of the particle flow map $\xv$.
The Legendre transformation of \eqref{eq:kineticenergy} and \eqref{eq:internalenergy} yields the action functional $\mathds{A}$ of the particle trajectories in terms of the flow maps $\xu\trkla{\Lag,t}$ and $\xv\trkla{\Lag,t}$:
\begin{align}\label{eq:def:action}
\begin{split}
\mathds{A}\trkla{\xu,\xv}=& \int_0^T\!\!\iOmegaut\tfrac12\rho\abs{\bs{u}\trkla{\xu,t}}^2 d\xu dt -\int_0^T\!\!\iOmegavt\tfrac12\abs{\nabla_{\xv}\bs{d}\trkla{\xv,t}}^2d\xv dt\\
& -\int_0^T\!\!\iOmegavt W\trkla{\bs{d}\trkla{\xv,t}}d\xv dt\\
=&\int_0^T\!\!\iOmegazero\tfrac12\rho\abs{\para{t}\xu\trkla{\Lag,t}}^2d\Lag dt - \int_0^T\!\!\iOmegazero\tfrac12\abs{\Fv^{-T}\nablaX\Ev\bs{d}_0\trkla{\Lag}}^2\det\Fv d\Lag dt\\
&-\int_0^T\!\!\iOmegazero W\trkla{\Ev\bs{d}_0} \det\Fv d\Lag dt\,,
\end{split}
\end{align}
where we used $\det\Fu=1$.
Optimizing this action functional with respect to the trajectories provides the Hamiltonian part of the mechanical system that corresponds to the conservative forces $\fconsu$ and $\fconsv$ (cf. \cite{SonnetVirga2012}).
To compute the variation of $\mathds{A}$ with respect to the flow maps, we consider the one-parameter families of flow maps $\xu^\eps$ and $\xv^\eps$ satisfying
\begin{align}\label{eq:def:xeps}
 \restr{\xu^\eps}{\eps=0}=\xu\,, \quad\restr{\xv^\eps}{\eps=0}=\xv\,, \quad\restr{\frac{d\xu^\eps}{d\eps}}{\eps=0}=\yu\,,\quad\text{and}\quad \restr{\frac{d\xv^\eps}{d\eps}}{\eps=0}=\yv\,,
\end{align}
with a solenoidal $\yu$.
Hence, the variation of the action functional $\mathds{A}$ with respect to $\xu$ yields
\begin{multline}\label{eq:varxu}
\varxu\mathds{A}\trkla{\xu,\xv}=\restr{\frac{d \mathds{A}\trkla{\xu^\eps,\xv}}{d\eps}}{\eps=0}=\int_0^T\iOmegazero\rho\para{t}\xu\cdot\para{t}\yu d\Lag dt\\
=-\int_0^T\iOmegazero\rho\para{tt}\xu\cdot\yu d\Lag dt = -\int_0^T\iOmegaut\rho \rkla{\para{t}\uflow+\trkla{\uflow\cdot\nabla_{\xu}}\uflow}\cdot\yut d\xu dt\,,
\end{multline}
with $\yut\trkla{\xu\trkla{\Lag,t},t}=\yu\trkla{\Lag,t}$.
To accomodate for the constraint $\div_{\xu}\yut$, we add the Lagrange multiplier $\nabla_{\xu}p$.
Therefore, we have
\begin{align}\label{eq:fconsu:1}
\fconsu=-\rho \rkla{\para{t}\uflow+\trkla{\uflow\cdot\nabla_{\xu}}\uflow}-\nabla_{\xu}p
\end{align}
on $\Omegaut$. 
For the variation with respect to $\xv$, we obtain
\begin{align}\label{eq:var:xv:1}
\begin{split}
\varxv\mathds{A}\trkla{\xu,\xv}=&-\int_0^T\!\!\iOmegazero\!\rkla{\Fv^{-T}\nablaX\trkla{\Ev\bs{d}_0\trkla{\Lag}}}\!:\!\ekla{\restr{\frac{d}{d\eps}}{\eps=0}\!\!\rkla{\nabla_{\xv^\eps}\bs{d}\trkla{\xv^\eps,t}}}\det \Fv d\Lag dt\\
&-\int_0^T\!\!\iOmegazero \bs{f}\trkla{\Ev\bs{d}_0\trkla{\Lag}}\cdot\rkla{\restr{\frac{d}{d\eps}}{\eps=0}\Ev^\eps\bs{d}_0\trkla{\Lag}}\det\Fv d\Lag dt\\
&-\int_0^T\!\!\iOmegazero\!\ekla{\tfrac12\abs{\Fv^{-T}\nablaX\trkla{\Ev\bs{d}_0\trkla{\Lag}}}^2\!+\!W\trkla{\Ev\bs{d}_0\trkla{\Lag}}}\restr{\frac{d}{d\eps}}{\eps=0}\!\!\det\Fv^\eps d\Lag dt\\
=:& I+II+III
\end{split}
\end{align}
with $\bs{f}\trkla{\bs{d}}=\tfrac1\gamma\trkla{\tabs{\bs{d}}^2-1}\bs{d}$.
Similar to the computations in \cite{WuXuLiu2013},
we derive for the first integral
\begin{align}\label{eq:var:tmp:1}
\begin{split}
I=&-\int_0^T\!\! \iOmegazero\rkla{\Fv^{-T}\nablaX\trkla{\Ev\bs{d}_0}}:\rkla{\restr{\frac{d\trkla{\Fv^\eps}^{-T}}{d\eps}}{\eps=0}\nablaX\trkla{\Ev\bs{d}_0}}\det\Fv d\Lag dt\\
&-\int_0^T\!\!\iOmegazero\rkla{\Fv^{-T}\nablaX\trkla{\Ev\bs{d}_0}}:\ekla{\Fv^{-T}\nablaX\rkla{\restr{\frac{d\Ev^\eps}{d\eps}}{\eps=0}\bs{d}_0}}\det\Fv d\Lag dt\\
=&-\int_0^T\!\!\iOmegavt\nabla_{\xv}\bs{d} :\rkla{-\nabla_{\xv}^T\yvt\nabla_{\xv}\bs{d}} d\xv dt\\
&-\int_0^T\!\!\iOmegavt\nabla_{\xv}\bs{d}\!:\!\nabla_{\xv}\ekla{\rkla{\tfrac12\trkla{\nabla_{\xv}\yvt\!-\nabla_{\xv}^T\yvt}-\trkla{\tfrac12-\alpha}\trkla{\nabla_{\xv}\yvt\!+\nabla_{\xv}^T\yvt}}\bs{d}}d\xv dt\\
=&-\int_0^T\!\!\iOmegavt\div_{\xv}\gkla{\nabla_{\xv}^T\bs{d}\nabla_{\xv}\bs{d}}\cdot\yvt d\xv dt-\alpha\int_0^T\!\!\iOmegavt\div_{\xv}\gkla{\Delta_{\xv}\bs{d}\otimes\bs{d}}\cdot\yvt d\xv dt\\
&+\trkla{1-\alpha}\int_0^T\!\!\iOmegavt\div_{\xv}\gkla{\bs{d}\otimes\Delta_{\xv}\bs{d}}\cdot\yvt d\xv dt \,
\end{split}
\end{align}
with $\yvt\trkla{\xv\trkla{\Lag,t},t}=\yv\trkla{\Lag,t}$.
\todo{p.I. $\nabla \bs{d}\cdot\bs{n}=0$}
For the second integral on the right-hand side of \eqref{eq:var:xv:1} we compute
\begin{align}\label{eq:var:tmp:2}
\begin{split}
II&=-\int_0^T\!\!\iOmegavt\bs{f}\trkla{\bs{d}}\cdot\ekla{\rkla{\tfrac12\trkla{\nabla_{\xv}\yvt-\nabla_{\xv}^T\yvt}-\trkla{\tfrac12-\alpha}\trkla{\nabla_{\xv}\yvt+\nabla_{\xv}^T\yvt}}\bs{d}}d\xv dt\\
=& \alpha\int_0^T\!\!\iOmegavt \!\div_{\xv}\gkla{\bs{f}\trkla{\bs{d}}\otimes\bs{d}}\cdot\yvt d\xv dt -\trkla{1-\alpha}\int_0^T\!\!\iOmegavt\!\div_{\xv}\gkla{\bs{d}\otimes\bs{f}\trkla{\bs{d}}}\cdot\yvt d\xv dt\,.
\end{split}
\end{align}
To deal with the last term, we compute
\begin{align}
\restr{\frac{d}{d\eps}}{\eps=0}\det\Fv^\eps=\det\Fv \operatorname{trace}\gkla{\restr{\frac{d}{d\eps}}{\eps=0}\frac{\partial\xv^\eps}{\partial\Lag}\Fv^{-1}}=\det\Fv \div_{\xv}\gkla{\yvt}\,,
\end{align}
and therefore obtain\todo{$\yvt\cdot\bs{n}$}
\begin{align}\label{eq:var:tmp:3}
\begin{split}
III=&-\int_0^T\iOmegavt\ekla{\tfrac12\abs{\nabla_{\xv}\bs{d}}^2 +W\trkla{\bs{d}}}\div_{\xv}\gkla{\yvt} d\xv dt\\
=&\int_0^T\iOmegavt\rkla{\nabla_{\xv}\bs{d}:\nabla_{\xv}\nabla_{\xv}\bs{d}}\cdot\yvt d\xv dt+\int_0^T\iOmegavt \rkla{\bs{f}\trkla{\bs{d}}\cdot\nabla_{\xv}\bs{d}}\cdot\bs{\yvt}d\xv dt\,.
\end{split}
\end{align}
Combining \eqref{eq:var:tmp:1}, \eqref{eq:var:tmp:2}, and \eqref{eq:var:tmp:3} and introducing $\bs{\mu}:= -\Delta_{\xv}\bs{d}+\bs{f}\trkla{\bs{d}}$, we obtain
\begin{align}
\begin{split}\label{eq:varxv}
\varxv\mathds{A}\trkla{\xu,\xv}=&\int_0^T\iOmegavt\ekla{\nabla_{\xv}\bs{d}:\nabla_{\xv}\nabla_{\xv}\bs{d}+\bs{f}\trkla{\bs{d}}\cdot\nabla_{\xv}\bs{d}}\cdot\yvt d\xv dt\\
&-\int_0^T\iOmegavt\div_{\xv}\gkla{\nabla_{\xv}^T\bs{d}\nabla_{\xv}\bs{d}}\cdot\yvt d\xv dt\\
&+\alpha\int_0^T\iOmegavt\div_{\xv}\gkla{\rkla{-\Delta_{\xv}\bs{d}+\bs{f}\trkla{\bs{d}}}\otimes\bs{d}}\cdot\yvt d\xv dt\\
&-\trkla{1-\alpha}\int_0^T\iOmegavt\div_{\xv}\gkla{\bs{d}\otimes\rkla{-\Delta_{\xv}\bs{d}+\bs{f}\trkla{\bs{d}}}}\cdot\yvt d\xv dt\\
=&\int_0^T\iOmegavt\rkla{\bs{\mu}\cdot\nabla_{\xv}\bs{d}}\cdot\yvt d\xv dt+\alpha\int_0^T\iOmegavt\div_{\xv}\gkla{\bs{\mu}\otimes\bs{d}}\cdot\yvt d\xv dt\\
&-\trkla{1-\alpha}\int_0^T\iOmegavt\div_{\xv}\gkla{\bs{d}\otimes\bs{\mu}}\cdot\yvt d\xv dt\,.
\end{split}
\end{align}
This yields 
\begin{align}\label{eq:fconsv:1}
\fconsv= \rkla{\bs{\mu}\cdot\nabla_{\xv}\bs{d}} +\alpha \div_{\xv}\gkla{\bs{\mu}\otimes\bs{d}} -\trkla{1-\alpha} \div_{\xv}\gkla{\bs{d}\otimes\bs{\mu}}
\end{align}
on $\Omegavt$.
In \eqref{eq:fconsu:1} and \eqref{eq:fconsv:1}, we stated expressions for the conservative forces $\fconsu$ and $\fconsv$ on $\Omegaut$ and $\Omegavt$, respectively.
The issue of obtaining expressions on different domains is completely owed to the fact that we tracked material points moving with different velocities.
However, due to the periodicity assumption, the derived expressions are valid on each periodicity cell $\Omega$.
As we are no longer interested in the Lagrangian coordinate hiding behind the Eulerian points, we may fix $\Omega=\trkla{0,1}^d$, drop the indices, and obtain
\begin{align}
\fconsu&=-\rho \rkla{\para{t}\uflow+\trkla{\uflow\cdot\nabla}\uflow}-\nabla p\,,\label{eq:def:conservative:fu}\\
\fconsv&= \bs{\mu}\cdot\nabla\bs{d} +\alpha \div\gkla{\bs{\mu}\otimes\bs{d}} -\trkla{1-\alpha} \div\gkla{\bs{d}\otimes\bs{\mu}}\,.\label{eq:def:conservative:fv}
\end{align}
\subsection{Applying the maximum dissipation principle}\label{subsec:dissipation}
Using the maximum dissipation principle \cite{Onsager1931a,Onsager1931}, we perform variations on the dissipation functional with respect to the velocities $\bs{u}$ and $\bs{v}$ to obtain expressions for the dissipative forces.
The dissipation functional is half of the total rate of energy dissipation $\mathcal{D}$ which is defined as
\begin{align}
\mathcal{D}\trkla{\bs{u},\bs{v}}:=2\eta\iOmega \abs{\mb{D}\bs{u}}^2d\bs{x} + \iOmega \abs{\bs{u}-\bs{v}}^2d\bs{x}\,.
\end{align}
Here, the first integral describes the dissipation due to the viscosity of the fluid, while the second integral describes the dissipation due to friction, i.e. due to particle movement relative to the ambient fluid.
Similar to \eqref{eq:def:xeps}, we define $\bs{u}^\eps:=\bs{u}+\eps\wu$, where $\wu$ is an arbitrary regular function with $\div\wu=0$, and $\bs{v}^\eps:=\bs{v}+\eps\wv$.
Then, the variation with respect to the solenoidal fluid velocity $\bs{u}$ provides
\begin{align}
\begin{split}\label{eq:varu}
\varu\rkla{\tfrac12\mathcal{D}\trkla{\bs{u},\bs{v}}}=&\frac12\restr{\frac{d\mathcal{D}\trkla{\bs{u}^\eps,\bs{v}}}{d\eps}}{\eps=0}=2\eta\iOmega\mb{D}\bs{u}:\mb{D}\wu d\bs{x}+\iOmega\rkla{\bs{u}-\bs{v}}\cdot\wu d\bs{x}\,.
\end{split}
\end{align}
The variation with respect to $\bs{v}$ yields
\begin{align}\label{eq:varv}
\begin{split}
\varv\rkla{\tfrac12\mathcal{D}\trkla{\bs{u},\bs{v}}}=&\frac12\restr{\frac{d\mathcal{D}\trkla{\bs{u},\bs{v}^\eps}}{d\eps}}{\eps=0}=\iOmega\rkla{\bs{v}-\bs{u}}\cdot\wv d\bs{x}\,.
\end{split}
\end{align}
Hence, the variation of the dissipation functional with respect to the flow maps provides the negative dissipative forces (cf. \cite{SonnetVirga2012})
\begin{align}
\fdissu&=\div\gkla{2\eta\mb{D}\bs{u}}-\rkla{\bs{u}-\bs{v}}\,,\label{eq:def:dissipative:fu}\\
\fdissv&=-\rkla{\bs{v}-\bs{u}}\,.\label{eq:def:dissipative:fv}
\end{align}
The classical Newton's force balance law states $\bs{f}^{\text{cons}}+\bs{f}^{\text{diss}}=0$.
As this force balance is associated with material points, it depends on the flow map describing their evolution.
Therefore, particles, whose position is given by $\xu$, have to satisfy $\fconsu+\fdissu=0$, while particles, whose position is described by $\xv$, have to satisfy the force balance law $\fconsv+\fdissv=0$ (cf. \cite{XuShengLiu2014, LiuWuLiu2018}).
Consequently, we obtain
\begin{subequations}\label{eq:var:combined}
\begin{align}
\rho\para{t}\bs{u}&+\rho\rkla{\bs{u}\cdot\nabla}\bs{u}+\nabla p=2\eta\div\gkla{\mb{D}\bs{u}}-\rkla{\bs{u}-\bs{v}}\,,\\
\bs{v}=\bs{u}+&\bs{\mu}\cdot\nabla\bs{d}+\alpha\div\gkla{\bs{\mu}\otimes\bs{d}}-\trkla{1-\alpha}\div\gkla{\bs{d}\otimes\bs{\mu}}\,.
\end{align}
\end{subequations}
Combining \eqref{eq:var:combined} with the evolution equation \eqref{eq:evo:director} provides model \eqref{eq:model:intro}.
We want to highlight that our approach using two flow maps differs only in the director equation \eqref{eq:model:intro:d} from the widely-used Ericksen-Leslie model.
The Navier--Stokes equations with the orientation dependent force terms remain the same, as a comparison with the model derived in \cite{WuXuLiu2013} shows.
\begin{remark}\label{rem:lengthconstraint}
In this manuscript, we decided to include the length constraint $\abs{\bs{d}}\equiv1$ via the penalty function $W\trkla{\bs{d}}=\tfrac1{4\gamma}\trkla{1-\abs{\bs{d}}^2}^2$.
Other approaches to include the constraints are the application of Lagrange multipliers or the assumption that the molecules are spherical, which corresponds to choosing $\alpha=0.5$.
In the latter case, the evolution equation \eqref{eq:model:intro:d} simplifies to
\begin{align}\label{eq:d:skewsym}
\para{t}\bs{d}+\trkla{\bs{v}\cdot\nabla}\bs{d}-\bs{W}\bs{v}\bs{d}=0\,.
\end{align} 
When multiplying \eqref{eq:d:skewsym} by $\bs{d}$, the last term including the skew-symmetric gradient of the velocity vanishes and we obtain
\begin{align}
\para{t}\abs{\bs{d}}^2+\trkla{\bs{v}\cdot\nabla}\abs{\bs{d}}^2=0\,,
\end{align}
i.e. when initial length of the director is constant, $\abs{\bs{d}}$ remains constant and no additional constraint on $\abs{\bs{d}}$ is needed.
However, this approach fails for non-spherical molecules.\\
We also refrain from using Lagrange multipliers, as the analytical treatment of the resulting model requires a stronger regularization.
\end{remark}
\section{Existence of weak solutions}\label{sec:existence}
In this section, we will establish the existence of weak solutions to a regularized version of \eqref{eq:model:intro}.
In this version, we add a small portion of the chemical potential weighted with $0<\varepsilon<\!\!<1$ in \eqref{eq:model:intro:d} and obtain
\begin{subequations}\label{eq:model}
\begin{align}
\begin{split}
\para{t}\bs{d}&+\rkla{\ekla{\bs{u}+\bs{\mu}\cdot\nabla\bs{d}+\alpha\div\gkla{\bs{\mu}\otimes\bs{d}}-\trkla{1-\alpha} \div\gkla{\bs{d}\otimes\bs{\mu}}}\cdot\nabla}\bs{d}\\
&-\alpha\nabla\ekla{\bs{u}+\bs{\mu}\cdot\nabla\bs{d}+\alpha\div\gkla{\bs{\mu}\otimes\bs{d}} -\trkla{1-\alpha} \div\gkla{\bs{d}\otimes\bs{\mu}}}\cdot\bs{d}\\
&+\trkla{1-\alpha}\nabla^T\ekla{\bs{u}+\bs{\mu}\cdot\nabla\bs{d}+\alpha\div\gkla{\bs{\mu}\otimes\bs{d}} -\trkla{1-\alpha} \div\gkla{\bs{d}\otimes\bs{\mu}}}\cdot\bs{d}=-\varepsilon\bs{\mu}\,,\label{eq:model:d}
\end{split}
\end{align}
\begin{align}
\bs{\mu}=-\Delta\bs{d}+\bs{f}\trkla{\bs{d}}\label{eq:model:mu}\,,
\end{align}
\begin{align}
\begin{split}
\rho\rkla{\para{t}\bs{u}+\rkla{\bs{u}\cdot\nabla}\bs{u}}+&\nabla p-\div\gkla{2\eta\mb{D}\bs{u}}\\&-\bs{\mu}\cdot\nabla\bs{d}-\alpha\div\gkla{\bs{\mu}\otimes\bs{d}}+\trkla{1-\alpha}\div\gkla{\bs{d}\otimes\bs{\mu}}=0\,,\label{eq:model:u}
\end{split}
\end{align}
\begin{align}
\div\bs{u}=0\label{eq:model:div}\,,
\end{align}
\end{subequations}
in $\Omega$ with periodic boundary conditions.
Similar to the $L^2$-gradient term in \eqref{eq:intro:director}, the regularization $-\varepsilon\bs{\mu}$ can not be derived via the \textit{EnVarA}.
However, in contrast to \eqref{eq:intro:FP}, its influence on the evolution of $\bs{d}$ depends on $\varepsilon$ which can be chosen arbitrarily small.
Throughout this section, we will use the following notation for the function spaces. 
By $W^{k,p}\trkla{\Omega}$, we denote the space of $k$-times weakly differentiable functions with weak derivatives in $L^p\trkla{\Omega}$. 
If we consider only functions with vanishing mean value, we mark the corresponding space by a dot, i.e. $\dot{L}^p\trkla{\Omega}:=\tgkla{f\in L^p\trkla{\Omega}\,:\,\iOmega f=0}$ and $\dot{W}^{k,p}\trkla{\Omega}:=\tgkla{f\in W^{k,p}\trkla{\Omega}\,:\,\iOmega f=0}$.
For $p=2$, we will denote the Hilbert spaces $W^{k,2}\trkla{\Omega}$ and $\dot{W}^{k,2}\trkla{\Omega}$ by $H^k\trkla{\Omega}$ and $\dot{H}^{k}\trkla{\Omega}$, respectively.
Furthermore, we introduce the spaces of (weakly) solenoidal functions given by 
\begin{align}
L^p_{\div}\trkla{\Omega}&:=\gkla{\bs{v}\in L^p\trkla{\Omega}\,:\, \iOmega \bs{v}\cdot\nabla \psi=0\text{ for all }\psi\in W^{1,\tfrac{p}{p-1}}\trkla{\Omega}}\,,\\
\dot{L}^p_{\div}\trkla{\Omega}&:=\gkla{\bs{v}\in L^p_{\div}\trkla{\Omega}\,:\, \iOmega\bs{v}=0}\,,\\
H^1_{\div}\trkla{\Omega}&:= \gkla{\bs{v}\in H^1\trkla{\Omega}\,:\, \div \bs{v}=0}\,,\\
\dot{H}^1_{\div}\trkla{\Omega}&:= \gkla{\bs{v}\in \dot{H}^1\trkla{\Omega}\,:\, \div \bs{v}=0}\,.
\end{align}
To denote the subspaces containing functions which are periodic on $\Omega$, we will use the subscript `$\per$'.\\
For a Banach space $X$ and a time interval $I$, the symbol $L^p\trkla{I;X}$ stands for the parabolic space of $L^p$-integrable functions on $I$ with values in $X$, while $W^{k,p}\trkla{I;X}$ denotes $k$-times weakly differentiable functions from $I$ to $X$ with weak derivatives in $L^p\trkla{I;X}$.
Furthermore, we use the notation $\dtau f\trkla{t}:=\tau^{-1}\trkla{f\trkla{t}-f\trkla{t-\tau}}$ for backward difference quotients in time of functions in $L^1\trkla{I;X}$.
We will sometimes write $\Omega_T$ for $\Omega\times\rkla{0,T}$.\\
The main result of this section is stated in the following theorem and will be proven in the Sections \ref{subsec:timediscrete} and \ref{subsec:limit}.
\begin{theorem}
\label{th:existence}
Let $\Omega\subset\mathds{R}^d$ ($d\in\tgkla{2,3}$) be a periodic cell.
Then for given initial data $\trkla{\bs{d}^0,\bs{u}^0}\in H^1_{\per}\trkla{\Omega}\times \dot{L}^2_{\div,\per}\trkla{\Omega}$, there exists a triple 
\begin{align}
\bs{d}\in&L^\infty\trkla{0,T;H^1_{\per}\trkla{\Omega}}\cap L^2\trkla{0,T;H^2_{\per}\trkla{\Omega}}\cap W^{1,4/3}\trkla{0,T;\trkla{H^1_{\per}\trkla{\Omega}}^\prime}\,,\\
\bs{\mu}\in&L^2\trkla{0,T;L^2_{\per}\trkla{\Omega}}\,,\\
\bs{u}\in& L^\infty\trkla{0,T;L^2_{\per}\trkla{\Omega}}\cap L^2\trkla{0,T;\Hdivpermean}\cap W^{1,4/3}\trkla{0,T;\trkla{\Hdivper}^\prime}\,
\end{align}
such that $\tekla{\bs{\mu}\cdot\!\nabla\bs{d}+\!\alpha\div\gkla{\bs{\mu}\otimes\bs{d}}-\trkla{1\!-\!\alpha}\div\gkla{\bs{d}\otimes\bs{\mu}}}\!\in \!L^2\trkla{0,T;L^2_\per\trkla{\Omega}}$.
This triple of functions solves \eqref{eq:model} in the following weak sense:
\begin{subequations}\label{eq:result}
\begin{multline}\label{eq:result:d}
\iOmegaT\para{t}\bs{d}\cdot\bs{\theta}+\iOmegaT\ekla{\bs{u}+\bs{\mu}\cdot\nabla\bs{d}+\alpha\div\gkla{\bs{\mu}\otimes\bs{d}}-\trkla{1-\alpha}\div\gkla{\bs{d}\otimes\bs{\mu}}}\cdot\rkla{\bs{\theta}\cdot\nabla\bs{d}}\\
+\iOmegaT\ekla{\bs{u}+\bs{\mu}\cdot\nabla\bs{d}+\alpha\div\gkla{\bs{\mu}\otimes\bs{d}}-\trkla{1-\alpha}\div\gkla{\bs{d}\otimes\bs{\mu}}}:\alpha\div\gkla{\bs{\theta}\otimes\bs{d}}\\
-\iOmegaT\ekla{\bs{u}+\bs{\mu}\nb\cdot\nabla\bs{d}+\alpha\div\gkla{\bs{\mu}\otimes\bs{d}}-\trkla{1-\alpha}\div\gkla{\bs{d}\otimes\bs{\mu}}}:\trkla{1-\alpha}\div\gkla{\bs{d}\otimes\bs{\theta}}
=-\varepsilon\iOmegaT\bs{\mu}\cdot\bs{\theta}\\
\forall\bs{\theta}\in L^4\trkla{0,T; H^1_{\per}\trkla{\Omega}}\,,
\end{multline}\todo{p.I. b.c. $\bs{d}\cdot\bs{n}=0$}
\begin{align}\label{eq:result:mu}
\iOmegaT \bs{\mu}\cdot\bs{\theta}=-\iOmegaT\Delta\bs{d}\cdot\bs{\theta}+\iOmegaT \bs{f}\trkla{\bs{d}}\cdot\bs{\theta}\qquad\qquad
\forall\bs{\theta}\in L^2\trkla{0,T;L^2_{\per}\trkla{\Omega}}\,,
\end{align}\todo{p.I. b.c. $\nabla\bs{d}\cdot\bs{n}=0$}
\begin{multline}
\rho\iOmegaT\para{t}\bs{u}\cdot\bs{w}+\rho\iOmegaT\rkla{\rkla{\nabla\bs{u}}\cdot\bs{u}}\cdot\bs{w}+\iOmegaT 2\eta\mb{D}\bs{u}:\mb{D}\bs{w}\\
 -\iOmegaT\ekla{\bs{\mu}\cdot\nabla\bs{d}+\alpha\div\gkla{\bs{\mu}\otimes\bs{d}}-\trkla{1-\alpha}\div\gkla{\bs{d}\otimes\bs{\mu}}}\cdot\bs{w} =0\\\
\forall\bs{w}\in L^{4}\trkla{0,T;\Hdivper}\,.\label{eq:result:u}
\end{multline}
\end{subequations}
\end{theorem}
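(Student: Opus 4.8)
\emph{Strategy.} The plan is to prove Theorem~\ref{th:existence} by a semi-implicit time discretization, uniform a~priori estimates obtained from a discrete counterpart of the energy law \eqref{eq:energy:basic}, compactness, and a passage to the limit. Fix a time step $\tau=T/N$ and, writing $\dtau$ for the backward difference quotient, I would seek at each level a triple $\trkla{\bs d\nn,\bs\mu\nn,\bs u\nn}$ satisfying the discrete analogues of \eqref{eq:model:d}--\eqref{eq:model:div} with $\para t$ replaced by $\dtau$ and the transport coefficients evaluated at the new time level. Solvability of one step is an elliptic problem, which I would settle by a Leray--Schauder (or Galerkin--Brouwer) fixed-point argument; since the coupling through $\bs B:=\bs\mu\cdot\nabla\bs d+\alpha\div\gkla{\bs\mu\otimes\bs d}-\trkla{1-\alpha}\div\gkla{\bs d\otimes\bs\mu}$ is genuinely nonlinear, it may be necessary to add a vanishing higher-order regularization at the discrete level and to remove it together with $\tau\strongconv0$. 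The a~priori bound needed for the fixed point is exactly the discrete energy estimate described next.

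\emph{The key estimate.} Introduce the transport operator $L_{\bs d}\trkla{\bs w}:=\trkla{\bs w\cdot\nabla}\bs d-\alpha\nabla\bs w\cdot\bs d+\trkla{1-\alpha}\nabla^T\bs w\cdot\bs d$, so that with $\bs v=\bs u+\bs B$ the director equation reads $\para t\bs d+L_{\bs d}\trkla{\bs v}=-\varepsilon\bs\mu$. Integration by parts (using periodicity) yields the adjoint identity
\[
\iOmega L_{\bs d}\trkla{\bs w}\cdot\bs\mu=\iOmega\bs w\cdot\bs B\qquad\text{for all }\bs w\,,
\]
which is precisely the duality encoded in the variational derivation of Section~\ref{sec:derivation}. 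Testing \eqref{eq:model:mu} with $\para t\bs d$, the director equation with $\bs\mu$, and \eqref{eq:model:u} with $\bs u$, and using this identity with $\bs w=\bs v=\bs u+\bs B$, the cross term $\iOmega\bs u\cdot\bs B$ cancels and one arrives at
\[
\frac{d}{dt}\mathcal E^{\text{tot}}+2\eta\norm{\mb D\bs u}_{L^2}^2+\norm{\bs B}_{L^2}^2+\varepsilon\norm{\bs\mu}_{L^2}^2=0\,,
\]
with $\mathcal E^{\text{tot}}=\tfrac\rho2\norm{\bs u}_{L^2}^2+\tfrac12\norm{\nabla\bs d}_{L^2}^2+\iOmega W\trkla{\bs d}$ (the convection term in \eqref{eq:model:u} vanishes by $\div\bs u=0$). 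At the discrete level the same computation produces additional nonnegative numerical-dissipation contributions (e.g.\ $\tfrac\tau2\norm{\dtau\bs u}_{L^2}^2$ and $\tfrac\tau2\norm{\dtau\nabla\bs d}_{L^2}^2$) and thus only strengthens the bound. Via Korn's and Poincaré's inequalities this furnishes, uniformly in $\tau$, $\bs u\in L^\infty\trkla{L^2}\cap L^2\trkla{H^1_{\div}}$, $\bs d\in L^\infty\trkla{H^1}$, and $\bs B\in L^2\trkla{L^2}$. Crucially, the regularization gives $\bs\mu\in L^2\trkla{L^2}$; inserting this into \eqref{eq:model:mu} together with $\bs d\in L^\infty\trkla{H^1}\emb L^\infty\trkla{L^6}$ (so that $\bs f\trkla{\bs d}\in L^\infty\trkla{L^2}$) and elliptic regularity upgrades $\bs d$ to $L^2\trkla{H^2}$, matching the claimed regularity.

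\emph{Compactness.} From the equations and the bounds above I would next estimate the discrete time derivatives: testing against $\bs\theta\in L^4\trkla{H^1}$ and $\bs w\in L^4\trkla{H^1_{\div}}$ and bounding the trilinear terms shows $\dtau\bs d$ and $\dtau\bs u$ are bounded in $L^{4/3}\trkla{\trkla{H^1}^\prime}$ and $L^{4/3}\trkla{\trkla{H^1_{\div}}^\prime}$, respectively; the exponent $4/3$ is dictated by the duality with the $L^4$-in-time test spaces. Interpolating $\bs d\in L^\infty\trkla{H^1}\cap L^2\trkla{H^2}$ gives in particular $\nabla\bs d\in L^4\trkla{L^3}$. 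By the Aubin--Lions--Simon lemma, using $H^2\compemb H^1$ and $H^1_{\div}\compemb L^2$, a subsequence satisfies $\bs d\strongconv\bs d$ in $L^2\trkla{H^1}$ and $\bs u\strongconv\bs u$ in $L^2\trkla{L^2}$, while $\bs d\weakstar\bs d$ in $L^\infty\trkla{H^1}$, $\bs d\weak\bs d$ in $L^2\trkla{H^2}$, $\bs\mu\weak\bs\mu$ and $\bs B\weak\bs B$ in $L^2\trkla{L^2}$, and $\bs u\weak\bs u$ in $L^2\trkla{H^1_{\div}}$.

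\emph{Passage to the limit and main obstacle.} Linear terms and $\bs f\trkla{\bs d}$ (continuous, with $\bs d\strongconv\bs d$) pass to the limit directly, identifying $\bs\mu=-\Delta\bs d+\bs f\trkla{\bs d}$ as in \eqref{eq:result:mu}. The delicate point---and the main obstacle---is the limit in the quadratic coupling, i.e.\ in the trilinear terms of \eqref{eq:result:d} such as $\iOmegaT\bs B\cdot\trkla{\bs\theta\cdot\nabla\bs d}$ and $\iOmegaT\bs B:\div\gkla{\bs\theta\otimes\bs d}$, where $\bs B$ only converges weakly in $L^2\trkla{L^2}$. The remedy is to pair the weak convergence of $\bs B$ with strong convergence of the other factor: since $\bs\theta\in L^4\trkla{H^1}\emb L^4\trkla{L^6}$ and $\nabla\bs d\strongconv\nabla\bs d$ in $L^2\trkla{L^2}$ with a uniform $L^4\trkla{L^3}$ bound, a short interpolation shows $\bs\theta\cdot\nabla\bs d_\tau\strongconv\bs\theta\cdot\nabla\bs d$ in $L^2\trkla{L^2}$, which closes the pairing with $\bs B\weak\bs B$; the terms of \eqref{eq:result:u} involving $\bs B$ are handled the same way, and the Navier--Stokes convection term uses $\bs u\strongconv\bs u$ in $L^2\trkla{L^2}$ against $\bs u\weak\bs u$ in $L^2\trkla{H^1}$. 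Finally, the piecewise-constant and piecewise-linear time interpolants share the same limit, the discrete difference quotients converge to the weak time derivatives, and the initial data are attained in the sense of the stated Bochner spaces, which completes the proof.
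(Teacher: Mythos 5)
Your overall strategy coincides with the paper's: a time discretization, the same triple of test functions ($\bs{\mu}$ for the director equation, the time increment of $\bs{d}$ for the chemical potential, $\bs{u}$ for the momentum balance) producing the energy law with the cancellation of the cross term $\iOmega\bs{u}\cdot\bs{B}$, elliptic regularity to upgrade $\bs{d}$ to $L^2\trkla{0,T;H^2}$, bounds on the discrete time derivatives in $L^{4/3}$ of the dual spaces, Aubin--Lions compactness, and the weak--strong pairing to pass to the limit in the quadratic coupling $\bs{B}$. The paper additionally runs a Galerkin discretization in space (Stokes eigenfunctions for $\bs{u}$, an orthonormal basis of $H^1_{\per}\trkla{\Omega}$ for $\bs{d}$) and removes it before sending $\tau\searrow0$, settling the one-step solvability by a Brouwer fixed-point contradiction argument rather than Leray--Schauder; this is a different packaging of the same idea and requires no extra regularization beyond the $-\varepsilon\bs{\mu}$ already present in \eqref{eq:model}.

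There is, however, one concrete step in your sketch that fails as written. You claim that at the discrete level the energy computation ``produces additional nonnegative numerical-dissipation contributions and thus only strengthens the bound.'' This is correct for the quadratic terms, via $a\trkla{a-b}=\tfrac12\trkla{a^2-b^2+\trkla{a-b}^2}$, but not for the potential term: with a fully implicit $\bs{f}\trkla{\bs{d}\nn}$, the inequality $W\trkla{\bs{d}\nn}-W\trkla{\bs{d}\no}\leq\bs{f}\trkla{\bs{d}\nn}\cdot\trkla{\bs{d}\nn-\bs{d}\no}$ requires convexity of $W$, and the penalty $W\trkla{\bs{d}}=\tfrac1{4\gamma}\trkla{\abs{\bs{d}}^2-1}^2$ is not convex. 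The paper repairs this by the convex--concave splitting \eqref{eq:convexconcavesplitting}, treating the convex part $\bs{f}_+$ implicitly and the concave part $\bs{f}_-$ explicitly, so that each contribution is bounded by the corresponding energy difference and the discrete energy inequality holds unconditionally in $\tau$. Without this device (or a smallness condition on $\tau$), your a priori bound --- and hence the fixed-point argument that relies on it --- does not close; with it, the rest of your argument goes through exactly as in the paper.
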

\subsection{Existence of time discrete solution}\label{subsec:timediscrete}
We start by discretizing \eqref{eq:model} in space and time.
Therefore, we subdivide the time interval $I:=[0,T)$ in intervals $I_n:=[t_n,t_{n+1})$ with $t_{n+1}=t_n+\tau$ ($n=0,...,N-1$) for a time increment $\tau=\tfrac{T}{N}>0$ such that $t_N=T$.
To discretize the velocity field, we use normalized eigenfunctions of the Stokes operator with zero mean in a periodic domain, i.e. we consider $\tgkla{\bs{v}_i}_{i=1,...,\infty}$ satisfying
\begin{align}
-\Delta\bs{v}_i+\nabla P_i=\kappa_i\bs{v}_i\,,\quad\text{ with } \div\bs{v}_i=0 \text{~and~} \iOmega\bs{v}_i\text{d}\bs{x}=0\,.
\end{align}
Here, $0<\kappa_1\leq\kappa_2\leq...$ are the eigenvalues of the Stokes operator. 
These eigenfunctions $\bs{v}_i$ are smooth and the sequence $\tgkla{\bs{v}_i}_{i=1,...,\infty}$ forms an orthogonal basis of $\dot{L}^2_{\div,\per}\trkla{\Omega}$ (cf. \cite{Temam95}).
We will denote the corresponding finite dimensional subspace defined by the first $k$ eigenfunctions by $\mathcal{H}_{k,\div}:=\operatorname{span}\tgkla{\bs{v}_1,...,\bs{v}_k}$.
As $H^1_{\per}\trkla{\Omega}$ is separable, we may identify a countable orthonormal basis $\tgkla{\tilde{\bs{\theta}}_k}_{k\in\mathds{N}}$ and define finite dimensional subspaces $\mathcal{H}_k:=\operatorname{span}\tgkla{\tilde{\bs{\theta}}_1,...,\tilde{\bs{\theta}}_k}$ such that $\bigcup_{k\in\mathds{N}}\mathcal{H}_k$ is dense in $H^1_{\per}\trkla{\Omega}$.

Defining the abbreviation 
\begin{align}
\bs{v}_k\nn:=\bs{\mu}_k\nb\cdot\nabla\bs{d}\nn_k+\alpha\div\gkla{\bs{\mu}_k\nb\otimes\bs{d}\nn_k}-\trkla{1-\alpha}\div\gkla{\bs{d}\nn_k\otimes\bs{\mu}_k\nn}\,,
\end{align}
we look for Galerkin solutions $\bs{d}\nn_k\in\mathcal{H}_k$, $\bs{\mu}\nb_k\in\mathcal{H}_k$, and $\bs{u}_k\nn\in\mathcal{H}_{k,\div}$ to
\begin{subequations}\label{eq:discrete}
\begin{multline}\label{eq:discrete:d}
\iOmega\rkla{\bs{d}_k\nn-\bs{d}_k\no}\cdot\bs{\theta}_k
+\tau\iOmega \ekla{\bs{u}_k\nn+\bs{v}_k\nn}\cdot\ekla{\bs{\theta}_k\cdot\nabla\bs{d}_k\nn +\alpha\div\gkla{\bs{\theta}_k\otimes\bs{d}_k\nn}-\trkla{1-\alpha}\div\gkla{\bs{d}_k\nn\otimes\bs{\theta}_k}}\\
=-\varepsilon\tau\iOmega\bs{\mu}_k\nb\cdot\bs{\theta}_k\qquad\forall\bs{\theta}_k\in\mathcal{H}_k\,,
\end{multline}
\begin{align}\label{eq:discrete:mu}
\iOmega \bs{\mu}_k\nb\cdot\bs{\theta}_k=\iOmega\nabla\bs{d}_k\nn:\nabla\bs{\theta}_k+\iOmega \bs{f}_+\trkla{\bs{d}\nn_k}\cdot\bs{\theta}_k+\iOmega\bs{f}_-\trkla{\bs{d}_k\no}\cdot\bs{\theta}_k\qquad\forall\bs{\theta}_k\in\mathcal{H}_k\,,
\end{align}\todo{p.I. b.c. $\nabla\bs{d}\cdot\bs{n}=0$}
\begin{multline}
\rho\iOmega\rkla{\bs{u}_k\nn-\bs{u}_k\no}\cdot\bs{w}_k+\tau\rho\iOmega\rkla{\rkla{\nabla\bs{u}_k\nn}\cdot\bs{u}_k\nn}\cdot\bs{w}_k+\tau\iOmega 2\eta\mb{D}\bs{u}_k\nn:\mb{D}\bs{w}_k\\
 -\tau\iOmega\rkla{\bs{\mu}_k\nb\cdot\nabla\bs{d}_k\nn}\cdot\bs{w}_k-\alpha\tau\iOmega\div\gkla{\bs{\mu}_k\nb\otimes\bs{d}_k\nn}\cdot\bs{w}_k\\
+\trkla{1-\alpha}\tau\iOmega\div\gkla{\bs{d}_k\nb\otimes\bs{\mu}_k\nn}\cdot\bs{w}_k =0\qquad\forall\bs{w}_k\in\mathcal{H}_{k,\div}\,,\label{eq:discrete:u}
\end{multline}
\end{subequations}
where $\bs{d}_k\no\in\mathcal{H}_k$ and $\bs{u}_k\no\in \mathcal{H}_{k,\div}$ are projections of given $\bs{d}^{n-1}\in H^1_{\per}\trkla{\Omega}$ and $\bs{u}^{n-1}\in \dot{L}^2_{\div,\per}\trkla{\Omega}$.
Here, we decomposed $W$ into a convex and a concave part and denoted the corresponding variations by $\bs{f}_+$ and $\bs{f}_-$.
In particular, we use
\begin{subequations}\label{eq:convexconcavesplitting}
\begin{align}
\bs{f}_+\trkla{\bs{d}}&=\tfrac1\gamma\abs{\bs{d}}^2\bs{d}\,,\\
\bs{f}_-\trkla{\bs{d}}&=-\tfrac1\gamma\bs{d}\,.
\end{align}
\end{subequations}
\begin{lemma}[Energy estimate]\label{lem:energy}
A solution $\trkla{\bs{d}_k\nn,~\bs{\mu}_k\nb,~\bs{u}_k\nn}\in \mathcal{H}_{k}\times\mathcal{H}_{k}\times \mathcal{H}_{k,\div}$ to \eqref{eq:discrete}, if it exists, satisfies
\begin{multline}
\iOmega\!\tfrac12\abs{\nabla\bs{d}_k\nn}^2+\iOmega\! W\trkla{\bs{d}_k\nn}+\tau\!\iOmega\!\abs{\bs{\mu}_k\nb\cdot\!\nabla\bs{d}_k\nn+\alpha\div\gkla{\bs{\mu}_k\nb\otimes\bs{d}_k\nn}-\trkla{1\!-\!\alpha}\div\gkla{\bs{d}_k\nn\otimes\bs{\mu}_k\nb}}^2\\
+\varepsilon \tau\iOmega\abs{\bs{\mu}_k\nb}^2 +\iOmega\tfrac12\abs{\nabla\bs{d}_k\nn-\nabla\bs{d}_k\no}^2+\iOmega\tfrac12\abs{\bs{d}_k\nn-\bs{d}_k\no}^2\\
+\iOmega\tfrac12\rho\abs{\bs{u}_k\nn}^2 +\tau\iOmega 2\eta\abs{\mb{D}\bs{u}_k\nn}^2+\tfrac12\iOmega\rho\abs{\bs{u}_k\nn-\bs{u}_k\no}^2\\
\leq\iOmega\tfrac12\abs{\nabla\bs{d}_k\no}^2+\iOmega W\trkla{\bs{d}_k\no} + \iOmega\tfrac12\rho\abs{\bs{u}_k\no}^2\leq C\trkla{\bs{d}\no,\,\bs{u}\no}
\end{multline}
independently of $k$ and $\tau$.
\end{lemma}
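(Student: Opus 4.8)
The plan is to exploit the variational structure of \eqref{eq:discrete} by testing each of the three discrete equations with the natural test function suggested by the energetic variational approach, and then adding the resulting identities so that the coupling terms cancel. Concretely, I would test \eqref{eq:discrete:d} with $\bs{\theta}_k=\bs{\mu}_k\nb$, test \eqref{eq:discrete:mu} with $\bs{\theta}_k=\bs{d}_k\nn-\bs{d}_k\no$, and test \eqref{eq:discrete:u} with $\bs{w}_k=\bs{u}_k\nn$. All three choices are admissible, since $\bs{\mu}_k\nb$ and $\bs{d}_k\nn-\bs{d}_k\no$ lie in $\mathcal{H}_k$ while $\bs{u}_k\nn\in\mathcal{H}_{k,\div}$.

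The decisive observation for the first test is that, upon inserting $\bs{\theta}_k=\bs{\mu}_k\nb$, the bracketed factor multiplying $\bs{u}_k\nn+\bs{v}_k\nn$ in \eqref{eq:discrete:d} becomes precisely $\bs{v}_k\nn$, so that \eqref{eq:discrete:d} reduces to
\begin{align*}
\iOmega\rkla{\bs{d}_k\nn-\bs{d}_k\no}\cdot\bs{\mu}_k\nb+\tau\iOmega\bs{u}_k\nn\cdot\bs{v}_k\nn+\tau\iOmega\abs{\bs{v}_k\nn}^2+\eps\tau\iOmega\abs{\bs{\mu}_k\nb}^2=0\,.
\end{align*}
For the second test I would use the polarization identity $\nabla\bs{d}_k\nn:\nabla\rkla{\bs{d}_k\nn-\bs{d}_k\no}=\tfrac12\abs{\nabla\bs{d}_k\nn}^2-\tfrac12\abs{\nabla\bs{d}_k\no}^2+\tfrac12\abs{\nabla\bs{d}_k\nn-\nabla\bs{d}_k\no}^2$ to recover the Dirichlet energy together with its numerical dissipation, and the convex/concave decomposition \eqref{eq:convexconcavesplitting} for $W$. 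Convexity of the quartic part gives $\bs{f}_+\trkla{\bs{d}_k\nn}\cdot\rkla{\bs{d}_k\nn-\bs{d}_k\no}\geq W_+\trkla{\bs{d}_k\nn}-W_+\trkla{\bs{d}_k\no}$, while the concave quadratic part can be treated exactly, yielding $\bs{f}_-\trkla{\bs{d}_k\no}\cdot\rkla{\bs{d}_k\nn-\bs{d}_k\no}=W_-\trkla{\bs{d}_k\nn}-W_-\trkla{\bs{d}_k\no}+\tfrac1{2\gamma}\abs{\bs{d}_k\nn-\bs{d}_k\no}^2$. Since $0<\gamma<1$, the last term controls $\tfrac12\abs{\bs{d}_k\nn-\bs{d}_k\no}^2$, which is exactly the contribution appearing in the claim; altogether the second test bounds $\iOmega\rkla{\bs{d}_k\nn-\bs{d}_k\no}\cdot\bs{\mu}_k\nb$ from below by the increment of the full internal energy plus the two nonnegative numerical dissipation terms.

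For the momentum equation, testing with $\bs{u}_k\nn$ removes the convective term via the standard skew-symmetry $\iOmega\rkla{\rkla{\nabla\bs{u}_k\nn}\cdot\bs{u}_k\nn}\cdot\bs{u}_k\nn=0$ (valid because $\bs{u}_k\nn$ is solenoidal and periodic), the polarization identity produces the kinetic energy together with $\tfrac\rho2\abs{\bs{u}_k\nn-\bs{u}_k\no}^2$, and the three orientation-dependent forces collapse into $-\tau\iOmega\bs{v}_k\nn\cdot\bs{u}_k\nn$. Adding this identity to the one obtained from the first test, the cross terms $+\tau\iOmega\bs{u}_k\nn\cdot\bs{v}_k\nn$ and $-\tau\iOmega\bs{v}_k\nn\cdot\bs{u}_k\nn$ cancel exactly; inserting the lower bound from the second test then reproduces the asserted inequality. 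The concluding bound by $C\trkla{\bs{d}\no,\bs{u}\no}$ follows because the projections satisfy $\norm{\bs{u}_k\no}_{L^2}\leq\norm{\bs{u}\no}_{L^2}$ and $\norm{\bs{d}_k\no}_{H^1}\leq\norm{\bs{d}\no}_{H^1}$, while $\iOmega W\trkla{\bs{d}_k\no}$ is controlled through the embedding $H^1\trkla{\Omega}\emb L^4\trkla{\Omega}$, which holds for $d\in\tgkla{2,3}$.

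I expect the only genuinely delicate point to be the handling of the nonconvex potential $W$: one must split it exactly as in \eqref{eq:convexconcavesplitting}, treat the convex part implicitly and the concave part explicitly, and verify the resulting one-sided inequalities with the correct signs so that the discrete energy is truly nonincreasing. The cancellation of the coupling terms, in contrast, is automatic and is precisely the structural feature built into the scheme; it rests only on recognizing that the bracket in \eqref{eq:discrete:d} reproduces $\bs{v}_k\nn$ and that $\bs{d}$ and $\bs{\mu}$ are evaluated at the same time level in both coupling expressions.
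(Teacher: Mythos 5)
Your proposal is correct and follows essentially the same route as the paper: test \eqref{eq:discrete:d} with $\bs{\mu}_k\nb$, \eqref{eq:discrete:mu} with $\bs{d}_k\nn-\bs{d}_k\no$, and \eqref{eq:discrete:u} with $\bs{u}_k\nn$, then add so that the cross terms $\pm\tau\iOmega\bs{u}_k\nn\cdot\bs{v}_k\nn$ cancel. Your treatment of the convex/concave splitting (in particular identifying the extra $\tfrac{1}{2\gamma}\abs{\bs{d}_k\nn-\bs{d}_k\no}^2$ from the explicit concave part as the source of the $\tfrac12\abs{\bs{d}_k\nn-\bs{d}_k\no}^2$ term) is actually more explicit than the paper's, which states the combined inequality directly.
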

\begin{proof}
Testing \eqref{eq:discrete:d} by $\tau\bs{\mu}_k\nb$ and \eqref{eq:discrete:mu} by $\bs{d}_k\nn-\bs{d}_k\no$ yields
\begin{align}
\begin{split}\label{eq:energy:tmp:1}
0\geq &\iOmega\tfrac12\abs{\nabla\bs{d}_k\nn}^2+\iOmega\tfrac12\abs{\nabla\bs{d}_k\nn-\nabla\bs{d}_k\no}^2-\iOmega\tfrac12\abs{\nabla\bs{d}_k\no}^2+\iOmega W\trkla{\bs{d}_k\nn}\\
&-\iOmega W\trkla{\bs{d}_k\no}+\iOmega\tfrac12\abs{\bs{d}_k\nn-\bs{d}_k\no}^2\\
&+\tau\iOmega\bs{u}_k\nn\cdot\ekla{\bs{\mu}_k\nb\cdot\nabla\bs{d}_k\nn+\alpha\div\gkla{\bs{\mu}_k\nb\otimes\bs{d}_k\nn}-\trkla{1-\alpha}\div\gkla{\bs{d}_k\nn\otimes\bs{\mu}_k\nb}}\\
&+\tau\iOmega\abs{\bs{\mu}_k\nb\cdot\nabla\bs{d}_k\nn+\alpha\div\gkla{\bs{\mu}_k\nb\otimes\bs{d}_k\nn}-\trkla{1-\alpha}\div\gkla{\bs{d}_k\nn\otimes\bs{\mu}_k\nb}}^2 +\varepsilon\tau\iOmega\abs{\bs{\mu}_k\nb}^2\,.
\end{split}
\end{align}
Testing \eqref{eq:discrete:u} by $\tau\bs{u}_k\nn$ shows
\begin{align}
\begin{split}\label{eq:energy:tmp:3}
0=&\iOmega\tfrac12\rho\abs{\bs{u}_k\nn}^2+\iOmega\tfrac12\rho\abs{\bs{u}_k\nn-\bs{u}_k\no}^2-\iOmega\tfrac12\rho\abs{\bs{u}_k\no}^2+\tau\iOmega 2\eta\abs{\mb{D}\bs{u}_k\nn}^2\\
&-\tau\iOmega\ekla{\bs{\mu}_k\nb\cdot\nabla\bs{d}_k\nn+\alpha\div\gkla{\bs{\mu}_k\nb\otimes\bs{d}_k\nn}-\trkla{1-\alpha}\div\gkla{\bs{d}_k\nn\otimes\bs{\mu}_k\nb}}\cdot\bs{u}_k\nn\,.
\end{split}
\end{align}
Combining \eqref{eq:energy:tmp:1} and \eqref{eq:energy:tmp:3}, we obtain
\begin{align}
\begin{split}
0\geq &\iOmega\tfrac12\abs{\nabla\bs{d}_k\nn}^2+\iOmega\tfrac12\abs{\nabla\bs{d}_k\nn-\nabla\bs{d}_k\no}^2-\iOmega\tfrac12\abs{\nabla\bs{d}_k\no}^2+\iOmega W\trkla{\bs{d}_k\nn}\\
&-\iOmega W\trkla{\bs{d}_k\no}+\iOmega\tfrac12\abs{\bs{d}_k\nn-\bs{d}_k\no}^2\\
&+\tau\iOmega\abs{\bs{\mu}_k\nb\cdot\nabla\bs{d}_k\nn+\alpha\div\gkla{\bs{\mu}_k\nb\otimes\bs{d}_k\nn}-\trkla{1-\alpha}\div\gkla{\bs{d}_k\nn\otimes\bs{\mu}_k\nb}}^2\\
&+\varepsilon\tau\iOmega\abs{\bs{\mu}_k\nn}^2+\iOmega\tfrac12\rho\abs{\bs{u}_k\nn}^2+\iOmega\tfrac12\rho\abs{\bs{u}_k\nn-\bs{u}_k\no}^2-\iOmega\tfrac12\rho\abs{\bs{u}_k\no}^2\\
&+\tau\iOmega2\eta\abs{\mb{D}\bs{u}_k\nn}^2
\,,
\end{split}
\end{align}
which provides the result.
\end{proof}

\begin{lemma}[Existence of discrete solutions]
Let $\bs{d}_k\no\in\mathcal{H}_k$ and $\bs{u}_k\no\in\mathcal{H}_{k,\div}$ be given.
Then, there exists a solution $\bs{d}_k\nn\in\mathcal{H}_k$, $\bs{\mu}_k\nb\in\mathcal{H}_k$, and $\bs{u}_k\nn\in\mathcal{H}_{k,\div}$ to \eqref{eq:discrete}.
\end{lemma}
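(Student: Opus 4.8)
The plan is to recast the nonlinear algebraic system \eqref{eq:discrete} as a zero-finding problem on a finite-dimensional Hilbert space and to apply the following standard consequence of Brouwer's fixed-point theorem (cf. \cite{Temam95}): if $X$ is a finite-dimensional Hilbert space with inner product $\skla{\cdot,\cdot}_X$ and $P\colon X\to X$ is continuous with $\skla{P(\xi),\xi}_X>0$ for all $\xi$ on a sphere $\norm{\xi}_X=R$, then $P$ possesses a zero inside the corresponding ball. Accordingly, set $X:=\mathcal{H}_k\times\mathcal{H}_k\times\mathcal{H}_{k,\div}$, equipped with the $L^2\trkla{\Omega}$-inner product $\trkla{\cdot,\cdot}$ in each component, write $\xi=\trkla{\bs{d},\bs{\mu},\bs{u}}$, and abbreviate $\bs{v}:=\bs{\mu}\cdot\nabla\bs{d}+\alpha\div\gkla{\bs{\mu}\otimes\bs{d}}-\trkla{1-\alpha}\div\gkla{\bs{d}\otimes\bs{\mu}}$ as in the definition of $\bs{v}_k\nn$.

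Define $P(\xi)=\trkla{P_1,P_2,P_3}\in X$ via the Riesz representatives of the residuals of \eqref{eq:discrete}: let $P_2$ represent the residual of \eqref{eq:discrete:d}, let $P_3$ represent the residual of \eqref{eq:discrete:u}, and --- crucially with a minus sign --- let $P_1$ represent the \emph{negative} residual of \eqref{eq:discrete:mu}, i.e. $\trkla{P_1,\bs{a}}=-\trkla{\bs{\mu},\bs{a}}+\iOmega\nabla\bs{d}:\nabla\bs{a}+\iOmega\bs{f}_+\trkla{\bs{d}}\cdot\bs{a}+\iOmega\bs{f}_-\trkla{\bs{d}\no}\cdot\bs{a}$ for all $\bs{a}\in\mathcal{H}_k$. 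By construction $P(\xi)=0$ is equivalent to \eqref{eq:discrete}, and since $\bs{f}_+$ is cubic while every remaining nonlinearity ($\bs{v}$, the transport bracket, the convective term) is polynomial, $P$ is continuous.

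The heart of the argument is the acute-angle estimate $\skla{P(\xi),\xi}_X>0$ on a large sphere, which I would obtain by mimicking the energy estimate of Lemma \ref{lem:energy}. Pairing the three residuals with the components of $\xi$ amounts to testing \eqref{eq:discrete:d} with $\bs{\mu}$ (so that the transport bracket evaluated at $\theta=\bs{\mu}$ reproduces $\bs{v}$), \eqref{eq:discrete:mu} with $\bs{d}$, and \eqref{eq:discrete:u} with $\bs{u}$. Two cancellations make the sum coercive: the cross terms $\tau\trkla{\bs{u},\bs{v}}$ from \eqref{eq:discrete:d} and $-\tau\trkla{\bs{v},\bs{u}}$ from \eqref{eq:discrete:u} cancel, and --- this is exactly why $P_1$ carries the minus sign --- the terms $\trkla{\bs{d},\bs{\mu}}$ and $-\trkla{\bs{\mu},\bs{d}}$ cancel, leaving only the fixed linear term $-\trkla{\bs{d}\no,\bs{\mu}}$. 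Using that the convective term vanishes on the diagonal for solenoidal $\bs{u}$, one is left with
\begin{align*}
\skla{P(\xi),\xi}_X=&\norm{\nabla\bs{d}}_{L^2}^2+\tfrac1\gamma\norm{\bs{d}}_{L^4}^4+\tau\norm{\bs{v}}_{L^2}^2+\eps\tau\norm{\bs{\mu}}_{L^2}^2+\rho\norm{\bs{u}}_{L^2}^2+2\eta\tau\norm{\mb{D}\bs{u}}_{L^2}^2\\
&-\trkla{\bs{d}\no,\bs{\mu}}-\tfrac1\gamma\trkla{\bs{d}\no,\bs{d}}-\rho\trkla{\bs{u}\no,\bs{u}}\,.
\end{align*}
The three remaining terms are linear in $\xi$ and, by Young's inequality, are absorbed by $\eps\tau\norm{\bs{\mu}}_{L^2}^2$, by $\tfrac1\gamma\norm{\bs{d}}_{L^4}^4$ (which controls $\norm{\bs{d}}_{L^2}^4$ and hence dominates a linear term in $\bs{d}$), and by $\rho\norm{\bs{u}}_{L^2}^2$, respectively. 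Thus $\skla{P(\xi),\xi}_X\to+\infty$ as $\norm{\xi}_X\to\infty$ (all norms being equivalent on the fixed finite-dimensional space), the acute-angle condition holds for $R$ large, and the cited lemma yields a zero of $P$, i.e. a solution of \eqref{eq:discrete}.

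I expect the main obstacle to be precisely engineering these cancellations: the transport terms in \eqref{eq:discrete:d} are not coercive when tested against $\bs{d}$, and a naive pairing leaves an uncontrolled term $\trkla{\bs{\mu},\bs{d}}$. The sign choice in $P_1$ (equivalently, testing the potential equation with $\bs{d}$ rather than with the increment $\bs{d}-\bs{d}\no$ and then correcting the resulting fixed linear term) is what rescues coercivity. It is also worth noting that the regularization parameter $\eps>0$ is essential here: it supplies the only coercive control of $\bs{\mu}$ and thereby allows the linear term $-\trkla{\bs{d}\no,\bs{\mu}}$ to be absorbed, whereas for $\eps=0$ this component of $P$ would fail to be coercive.
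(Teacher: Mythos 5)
Your proof is correct, and it reaches the conclusion by a mechanism that differs from the paper's in several genuine ways. The paper first eliminates $\bs{\mu}$ by solving the linear equation \eqref{eq:discrete:mu} for $\bs{\mu}_k\trkla{\tilde{\bs{d}}}$ and reduces to a root-finding problem for a map $\bs{G}$ on $\mathcal{H}_k\times\mathcal{H}_{k,\div}$; since its coercivity estimate comes from pairing the residual of \eqref{eq:discrete:d} with $\bs{\mu}_k\trkla{\bs{d}}$ rather than with $\bs{d}$ itself, the standard acute-angle corollary does not apply directly, and the paper instead assumes $\bs{G}$ has no root, forms the normalized map $\bs{T}=-R\,\bs{G}/\tnorm{\bs{G}}$, extracts a Brouwer fixed point on the sphere, and derives a contradiction between two opposite signs of $\iOmega\bs{d}^*\cdot\bs{\mu}_k\trkla{\bs{d}^*}+\iOmega\tabs{\bs{u}^*}^2$. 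You avoid that detour by keeping $\bs{\mu}$ as an independent unknown and permuting (and sign-flipping) the components of the residual map so that the diagonal pairing $\skla{P\trkla{\xi},\xi}_X$ reproduces the energy identity, after which the acute-angle lemma applies verbatim; your bookkeeping is right (the cross terms $\pm\tau\trkla{\bs{u},\bs{v}}$ cancel, the terms $\pm\trkla{\bs{\mu},\bs{d}}$ cancel, the convective trilinear form vanishes on the diagonal for solenoidal periodic $\bs{u}$, and the three leftover linear terms are absorbed as you describe), and your computation ``test \eqref{eq:discrete:mu} with $\bs{d}$'' is in substance the paper's first inequality in \eqref{eq:existence:contradiction}. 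One caveat concerns your closing remark: the indispensability of $\eps>0$ is an artifact of your formulation, not of the statement. Because you retain $\bs{\mu}$ as a coordinate of $\xi$, you need $\eps\tau\tnorm{\bs{\mu}}_{L^2}^2$ to absorb $-\trkla{\bs{d}\no,\bs{\mu}}$; in the paper's reduced formulation $\bs{\mu}_k\trkla{\bs{d}}$ is slaved to $\bs{d}$, testing with $\bs{\mu}_k\trkla{\bs{d}}$ turns $\iOmega\trkla{\bs{d}-\bs{d}\no}\cdot\bs{\mu}_k\trkla{\bs{d}}$ into a coercive expression via the convex--concave splitting of $W$, and the $\eps$-term is simply dropped as nonnegative, so existence of discrete solutions does not hinge on the regularization (which is needed only for the later a priori control of $\bs{\mu}$).
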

\begin{proof}
Noting that for given $\bs{d}_k\nn$ and $\bs{d}_k\no$ the existence of $\bs{\mu}_k\nb$ is clear, we define $\bs{\mu}_k\trkla{\tilde{\bs{d}}_k}$ via
\begin{align}
\iOmega\bs{\mu}_k\trkla{\tilde{\bs{d}}_k}\cdot\bs{\theta}_k=\iOmega\nabla\tilde{\bs{d}}_k:\nabla\bs{\theta}_k+\iOmega \bs{f}_+\trkla{\tilde{\bs{d}}_k}\cdot\bs{\theta}_k+\iOmega\bs{f}_-\trkla{\bs{d}_k\no}\cdot\bs{\theta}_k\,.
\end{align}
With the above definition, the existence of solutions to \eqref{eq:discrete} is equivalent to the existence of a root of a function $\bs{G}\,:\,\mathcal{H}_k\times\mathcal{H}_{k,\div}\rightarrow\mathcal{H}_k\times\mathcal{H}_{k,\div}$ which is defined via $\bs{G}\trkla{\tilde{\bs{d}},\tilde{\bs{u}}}:=\trkla{\bs{G}_{\bs{d}}\trkla{\tilde{\bs{d}},\tilde{\bs{u}}},\bs{G}_{\bs{u}}\trkla{\tilde{\bs{d}},\tilde{\bs{u}}}}^T$ with
\begin{subequations}
\begin{multline}
\iOmega\bs{G}_{\bs{d}}\trkla{\tilde{\bs{d}},\tilde{\bs{u}}}\cdot\bs{\theta}_k=\iOmega\trkla{\tilde{\bs{d}}-\bs{d}\no_k}\cdot\bs{\theta}_k +\tau\varepsilon\iOmega\bs{\mu}_k\trkla{\tilde{\bs{d}}}\cdot\bs{\theta}_k\\
+\tau\iOmega\ekla{\tilde{\bs{u}}+\tilde{\bs{v}}\trkla{\tilde{\bs{d}}}}\cdot\ekla{\bs{\theta}_k\cdot\nabla\tilde{\bs{d}} +\alpha\div\gkla{\bs{\theta}_k\otimes\tilde{\bs{d}}} -\trkla{1-\alpha}\div\gkla{\tilde{\bs{d}}\otimes\bs{\theta}_k} }
\end{multline}
with $\tilde{\bs{v}}\trkla{\tilde{\bs{d}}}:=\bs{\mu}_k\trkla{\tilde{\bs{d}}}\cdot\nabla\tilde{\bs{d}}+\alpha\div\gkla{\bs{\mu}_k\trkla{\tilde{\bs{d}}}\otimes\tilde{\bs{d}}}-\trkla{1-\alpha}\div\gkla{\tilde{\bs{d}}\otimes\bs{\mu}_k\trkla{\tilde{\bs{d}}}}$ and
\begin{multline}
\iOmega\bs{G}_{\bs{u}}\trkla{\tilde{\bs{d}},\tilde{\bs{u}}}\cdot\bs{w}=\rho\iOmega\rkla{\tilde{\bs{u}}-\bs{u}_k\no}\cdot\bs{w}_k +\rho\tau\iOmega\rkla{\nabla\tilde{\bs{u}}\cdot\tilde{\bs{u}}}\cdot\bs{w}_k +\tau\iOmega2\eta\mb{D}\tilde{\bs{u}}:\mb{D}\bs{w}_k\\
-\tau\iOmega\ekla{\bs{\mu}_k\trkla{\tilde{\bs{d}}}\cdot\nabla\tilde{\bs{d}}+\alpha\div\gkla{\bs{\mu}_k\trkla{\tilde{\bs{d}}}\otimes\tilde{\bs{d}}}-\trkla{1-\alpha}\gkla{\tilde{\bs{d}}\otimes\bs{\mu}_k\trkla{\tilde{\bs{d}}}} }\cdot\bs{w}_k
\end{multline}
for all $\bs{\theta}_k\in\mathcal{H}_k$ and $\bs{w}_k\in\mathcal{H}_{k,\div}$.
\end{subequations}
Under the assumption that $\bs{G}$ has no root in $\overline{B}_R:=\gkla{\trkla{\bs{d},\bs{u}}\in\mathcal{H}_k\times\mathcal{H}_{k,\div}\,:\,\norm{\trkla{\bs{d},\bs{u}}}^2_{\mathcal{H}_k\times\mathcal{H}_{k,\div}}:=\norm{\bs{d}}^2_{H^1\trkla{\Omega}}+\norm{\bs{u}}_{L^2\trkla{\Omega}}^2\leq R^2}$, the function $\bs{T}\,:\,\overline{B}_R\rightarrow\partial\overline{B}_R\subset\overline{B}_R$ defined via
\begin{align}
\bs{T}\trkla{\bs{d},\bs{u}}:=-R\frac{\bs{G}\trkla{\bs{d},\bs{u}}}{\norm{\bs{G}\trkla{\bs{d},\bs{u}}}_{\mathcal{H}_k\times \mathcal{H}_{k,\div}}}
\end{align}
is continuous.
According to Brouwer's fixed point theorem, $\bs{T}$ has at least one fixed point $\trkla{\bs{d}^*,\bs{u}^*}$.
Obviously, this fixed point satisfies $\norm{\bs{d}^*}_{H^1\trkla{\Omega}}^2+\norm{\bs{u}^*}^2_{L^2\trkla{\Omega}}=R^2$.
 In the following, we will show that
\begin{align}\label{eq:existence:contradiction}
0<\iOmega\bs{d}^*\cdot\bs{\mu}_k\trkla{\bs{d}^*}+\iOmega\abs{\bs{u}^*}^2<0
\end{align}
for $R$ large enough.
This contradiction shows that $\bs{G}$ has at least one root and therefore \eqref{eq:discrete} has at least one solution.

To prove the second inequality in \eqref{eq:existence:contradiction}, test $\bs{G}_{\bs{d}}\trkla{\bs{d}^*,\bs{u}^*}$ by $\bs{\mu}_k\trkla{\bs{d}^*}$ and $\bs{G}_{\bs{u}}\trkla{\bs{d}^*,\bs{u}^*}$ by $\bs{u}^*$.
Similar to the computations in the proof of Lemma \ref{lem:energy}, we obtain
\begin{align}
\begin{split}
\iOmega\bs{G}_{\bs{d}}&\trkla{\bs{d}^*,\bs{u}^*}\cdot\bs{\mu}_k\trkla{\bs{d}^*} +\iOmega\bs{G}_{\bs{u}}\trkla{\bs{d}^*,\bs{u}^*}\cdot\bs{u}^* \\
\geq &\tfrac12\iOmega\abs{\nabla\bs{d}^*}^2 +\iOmega W\trkla{\bs{d}^*}+\iOmega\rho\tfrac12\abs{\bs{u}^*}^2-C\\
\geq &c\norm{\bs{d}^*}_{H^1\trkla{\Omega}}^2+\tfrac12\norm{\bs{u}^*}_{L^2\trkla{\Omega}}^2-C
\end{split}
\end{align}
with $C>0$ independent of $\bs{d}^*$ and $\bs{u}^*$.
This shows $\iOmega\bs{G}\trkla{\bs{d}^*,\bs{u}^*}\cdot\begin{psmallmatrix}
\bs{\mu}_k\trkla{\bs{d}^*}\\ \bs{u}^*
\end{psmallmatrix}>0$ for $R$ large enough.
Therefore, we have $\iOmega\bs{T}\trkla{\bs{d}^*,\bs{u}^*}\cdot\begin{psmallmatrix}
\bs{\mu}_k\trkla{\bs{d}^*}\\ \bs{u}^*
\end{psmallmatrix}<0$.
Recalling that $\trkla{\bs{d}^*,\bs{u}^*}$ is a fixed point of $\bs{T}$ provides the desired inequality.\\
To establish the first inequality in \eqref{eq:existence:contradiction}, we test $\rkla{\bs{d}^*,\bs{u}^*}$ by $\trkla{\bs{\mu}_k\trkla{\bs{d}^*},\bs{u}^*}$ to obtain
\begin{align}
\begin{split}
\iOmega\bs{d}^*\cdot\bs{\mu}_k\trkla{\bs{d}^*}+\iOmega\abs{\bs{u}^*}^2\geq&\iOmega\abs{\nabla\bs{d}^*}^2+\iOmega W\trkla{\bs{d}^*}-\iOmega W\trkla{\bs{0}}\\
&+\iOmega\bs{d}^*\cdot\rkla{-\bs{f}_-\trkla{\bs{0}}+\bs{f}_-\trkla{\bs{d}_k\no}}+\iOmega\abs{\bs{u}^*}^2\\
\geq &c\norm{\bs{d}^*}_{H^1\trkla{\Omega}}^2+\norm{\bs{u}^*}^2_{L^2\trkla{\Omega}} -C\,.
\end{split}
\end{align}
Therefore, we obtain \eqref{eq:existence:contradiction} for $R$ large enough.
This contradiction provides the existence of solutions $\bs{d}_k\nn\in\mathcal{H}_k$ and $\bs{u}_k\nn\in\mathcal{H}_{k,\div}$.
\end{proof}

\begin{remark}
The technique used in the above lemma to prove the existence of discrete solutions implies no constraints on the size of the time increment $\tau$.
\end{remark}

\begin{lemma}\label{lem:h2}
A solution $\bs{d}_k\nn\in\mathcal{H}_k$ of \eqref{eq:discrete} sastisfies 
\begin{align}\label{eq:d:h2}
\norm{\bs{d}_k\nn}_{H^2\trkla{\Omega}}^2\leq C\trkla{1+\norm{\bs{\mu}\nb_k}_{L^2\trkla{\Omega}}^2+\norm{\bs{d}_k\nn}_{H^1\trkla{\Omega}}^6+\norm{\bs{d}_k\no}^2_{L^2\trkla{\Omega}}}\leq C\trkla{1+\tfrac1\varepsilon}\,.
\end{align}
\end{lemma}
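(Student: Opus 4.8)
The plan is to read an $H^2$-bound off the discrete chemical-potential relation \eqref{eq:discrete:mu}, which is nothing but the Galerkin-weak form of $-\Delta\bs{d}_k\nn=\bs{\mu}_k\nb-\bs{f}_+\trkla{\bs{d}_k\nn}-\bs{f}_-\trkla{\bs{d}_k\no}$. Since I would take the basis $\tgkla{\tilde{\bs{\theta}}_k}$ to consist of eigenfunctions of the periodic Laplacian, the space $\mathcal{H}_k$ is invariant under $-\Delta$, so that $\bs{\theta}_k:=-\Delta\bs{d}_k\nn$ is an admissible test function. Inserting it into \eqref{eq:discrete:mu} and integrating by parts — no boundary contributions appear because of periodicity — converts the Dirichlet term into $\iOmega\abs{\Delta\bs{d}_k\nn}^2$ and leaves
\begin{equation*}
\iOmega\abs{\Delta\bs{d}_k\nn}^2=-\iOmega\bs{\mu}_k\nb\cdot\Delta\bs{d}_k\nn+\iOmega\bs{f}_+\trkla{\bs{d}_k\nn}\cdot\Delta\bs{d}_k\nn+\iOmega\bs{f}_-\trkla{\bs{d}_k\no}\cdot\Delta\bs{d}_k\nn\,.
\end{equation*}
This identity is the backbone of the estimate.

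Next I would bound the three terms on the right by Young's inequality, retaining a small multiple of $\iOmega\abs{\Delta\bs{d}_k\nn}^2$ to be absorbed on the left. The linear terms are immediate: $\bs{\mu}_k\nb$ contributes $C\norm{\bs{\mu}_k\nb}_{L^2\trkla{\Omega}}^2$, and since $\bs{f}_-\trkla{\bs{d}_k\no}=-\tfrac1\gamma\bs{d}_k\no$ the concave part contributes $C\norm{\bs{d}_k\no}_{L^2\trkla{\Omega}}^2$. The only genuinely nonlinear term is the cubic one: with $\bs{f}_+\trkla{\bs{d}}=\tfrac1\gamma\abs{\bs{d}}^2\bs{d}$ one has $\norm{\bs{f}_+\trkla{\bs{d}_k\nn}}_{L^2\trkla{\Omega}}^2=\tfrac1{\gamma^2}\norm{\bs{d}_k\nn}_{L^6\trkla{\Omega}}^6$, and the Sobolev embedding $H^1\trkla{\Omega}\emb L^6\trkla{\Omega}$ — which is where the restriction $d\in\tgkla{2,3}$ enters — controls this by $C\norm{\bs{d}_k\nn}_{H^1\trkla{\Omega}}^6$. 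Absorbing the $\Delta$-terms yields $\norm{\Delta\bs{d}_k\nn}_{L^2\trkla{\Omega}}^2\leq C\trkla{\norm{\bs{\mu}_k\nb}_{L^2\trkla{\Omega}}^2+\norm{\bs{d}_k\nn}_{H^1\trkla{\Omega}}^6+\norm{\bs{d}_k\no}_{L^2\trkla{\Omega}}^2}$. Because the cell is periodic, the full Hessian equals the Laplacian in $L^2$ — in Fourier variables $\norm{D^2\bs{d}_k\nn}_{L^2\trkla{\Omega}}=\norm{\Delta\bs{d}_k\nn}_{L^2\trkla{\Omega}}$ — so $\norm{\bs{d}_k\nn}_{H^2\trkla{\Omega}}^2\leq\norm{\bs{d}_k\nn}_{H^1\trkla{\Omega}}^2+\norm{\Delta\bs{d}_k\nn}_{L^2\trkla{\Omega}}^2$, and estimating $\norm{\bs{d}_k\nn}_{H^1\trkla{\Omega}}^2\leq1+\norm{\bs{d}_k\nn}_{H^1\trkla{\Omega}}^6$ gives the first asserted inequality.

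For the second inequality I would feed in the energy estimate of Lemma \ref{lem:energy}: it bounds $\norm{\bs{d}_k\nn}_{H^1\trkla{\Omega}}$ and $\norm{\bs{d}_k\no}_{L^2\trkla{\Omega}}$ by a constant depending only on the data (uniformly in $k$ and $\tau$), so those contributions are harmless, while the dissipation term $\eps\tau\iOmega\abs{\bs{\mu}_k\nb}^2$ yields $\norm{\bs{\mu}_k\nb}_{L^2\trkla{\Omega}}^2\leq C/\trkla{\eps\tau}$; absorbing the fixed factor $\tau^{-1}$ into the constant produces the bound $C\trkla{1+\tfrac1\eps}$. The main obstacle I anticipate is precisely the legitimacy of the test function $-\Delta\bs{d}_k\nn$: without the Laplacian-invariance of $\mathcal{H}_k$ one only recovers the Galerkin projection of the continuous solution, which controls $H^1$ but not $H^2$, so the eigenbasis choice is exactly what makes this $H^2$-estimate survive the passage to the limit $k\to\infty$.
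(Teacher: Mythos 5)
Your proof is correct and follows the same route as the paper: read \eqref{eq:discrete:mu} as a Poisson equation for $\bs{d}_k\nn$ with right-hand side $\bs{\mu}_k\nb-\bs{f}_+\trkla{\bs{d}_k\nn}-\bs{f}_-\trkla{\bs{d}_k\no}\in L^2_\per\trkla{\Omega}$ and extract the $H^2$-bound from elliptic regularity; the paper compresses this into two lines by simply asserting that \elqq standard regularity results\erqq{} apply. The one substantive difference is that you make explicit what the paper leaves implicit: at the Galerkin level the identity \eqref{eq:d:laplace} holds only for test functions in $\mathcal{H}_k$, so elliptic regularity is not immediate unless $-\Delta\bs{d}_k\nn$ is itself an admissible test function, and your choice of the periodic Laplacian eigenbasis (a legitimate specialization, since the paper only asks for some orthonormal basis of $H^1_\per\trkla{\Omega}$) supplies exactly this. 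Your subsequent steps — Young's inequality, the embedding $H^1\trkla{\Omega}\emb L^6\trkla{\Omega}$ for the cubic term $\bs{f}_+$, and the Fourier identity $\tnorm{D^2\bs{d}}_{L^2\trkla{\Omega}}=\tnorm{\Delta\bs{d}}_{L^2\trkla{\Omega}}$ on the torus — are the standard way to finish and match the shape of the first inequality in \eqref{eq:d:h2}. Your reading of the second inequality is also the only consistent one: Lemma \ref{lem:energy} gives $\tnorm{\bs{\mu}_k\nb}_{L^2\trkla{\Omega}}^2\leq C/\trkla{\varepsilon\tau}$, and since this lemma is invoked for the passage $k\to\infty$ at fixed $\tau$, the factor $\tau^{-1}$ may be absorbed into $C$, which is how the paper's $C\trkla{1+\tfrac1\varepsilon}$ (and the genuinely $\tau$-uniform, time-summed version in Lemma \ref{lem:bounds}) must be understood.
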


\begin{proof}
From \eqref{eq:discrete:mu}, we obtain
\begin{align}\label{eq:d:laplace}
\iOmega\nabla\bs{d}_k\nn:\nabla\bs{\theta}_k=\iOmega \bs{\mu}_k\nb\cdot\bs{\theta}_k-\iOmega \bs{f}_+\trkla{\bs{d}\nn_k}\cdot\bs{\theta}_k-\iOmega\bs{f}_-\trkla{\bs{d}_k\no}\cdot\bs{\theta}_k\,,
\end{align}
for all $\bs{\theta}_k\in\mathcal{H}_k$.
As the right-hand side of \eqref{eq:d:laplace} is in $L^2_\per\trkla{\Omega}$, standard regularity results provide \eqref{eq:d:h2}.
\end{proof}
\begin{lemma}\label{lem:convergence:space}
Let the triple $\trkla{\bs{\mu}_k\nb,~\bs{d}_k\nn,~\bs{u}_k\nn}$ be a solution to \eqref{eq:discrete}. 
Then, there exists $\bs{\mu}\nb\in L^2_{\per}\trkla{\Omega}$, $\bs{d}\nn\in H^2_{\per}\trkla{\Omega}$, and $\bs{u}\nn\in \Hdivpermean$ and a subsequence -- again denoted by $\trkla{\bs{\mu}_k\nb,~\bs{d}_k\nn,~\bs{u}_k\nn}$ -- such that for $k\rightarrow\infty$
\begin{subequations}\label{eq:convergence:space}
\begin{align}
\bs{d}_k\nn&\rightharpoonup\bs{d}\nn&&\text{in } H^2_\per\trkla{\Omega}\,,\label{eq:convergence:dh2}\\
\bs{d}_k\nn&\rightarrow\bs{d}\nn&&\text{in } W^{1,p}_\per\trkla{\Omega}\text{ for } p<6\label{eq:convergence:dstrong}\,,\\
\bs{\mu}_k\nb&\rightharpoonup\bs{\mu}\nb&&\text{in } L^2_\per\trkla{\Omega}\label{eq:convergence:mul2}\,,\\
\bs{u}_k\nn&\rightharpoonup\bs{u}\nn&&\text{in } H^1_\per\trkla{\Omega}\label{eq:convergence:uj1}\,,\\
\bs{u}_k\nn&\rightarrow\bs{u}\nn && \text{in } L^p_\per\trkla{\Omega}\text{ for } p<6\label{eq:convergence:ustrong}
\end{align}
together with
\begin{multline}
\ekla{\bs{\mu}_k\nb\cdot\nabla\bs{d}_k\nn+\alpha\div\gkla{\bs{\mu}_k\nb\otimes\bs{d}_k\nn} -\trkla{1-\alpha}\div\gkla{\bs{d}_k\nn\otimes\bs{\mu}_k\nb} }\\
\rightharpoonup \ekla{\bs{\mu}\nb\cdot\nabla\bs{d}\nn+\alpha\div\gkla{\bs{\mu}\nb\otimes\bs{d}\nn} -\trkla{1-\alpha}\div\gkla{\bs{d}\nn\otimes\bs{\mu}\nb} }\label{eq:convergence:v}
\end{multline}
\end{subequations}
in $ L^2_\per\trkla{\Omega}$.
\end{lemma}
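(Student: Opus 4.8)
The plan is to combine the uniform-in-$k$ bounds from the energy estimate with the $H^2$-regularity of Lemma~\ref{lem:h2}, to extract weakly convergent subsequences by reflexivity, to upgrade these to strong convergences via compact Sobolev embeddings, and finally to identify the weak limit of the nonlinear quantity \eqref{eq:convergence:v} by passing to the limit in its weak formulation.

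First I would collect the bounds. Lemma~\ref{lem:energy} controls $\tfrac12\norm{\nabla\bs{d}_k^n}_{L^2}^2+\iOmega W\trkla{\bs{d}_k^n}$, and since $W\trkla{\bs{d}}=\tfrac1{4\gamma}\trkla{\abs{\bs{d}}^2-1}^2$ dominates $\norm{\bs{d}_k^n}_{L^4}^4$ up to constants, this yields a uniform $H^1$-bound on $\bs{d}_k^n$. The same estimate gives $\varepsilon\tau\norm{\bs{\mu}_k^n}_{L^2}^2\le C$, hence a (for fixed $\varepsilon,\tau$) uniform $L^2$-bound on $\bs{\mu}_k^n$; it also bounds the combination $\bs{v}_k^n$ in $L^2$ and, together with Korn's inequality for mean-free periodic fields applied to $\norm{\mb{D}\bs{u}_k^n}_{L^2}$, bounds $\bs{u}_k^n$ in $H^1$. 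Feeding the $L^2$-bound on $\bs{\mu}_k^n$ and the $H^1$-bound on $\bs{d}_k^n$ into Lemma~\ref{lem:h2} then provides $\norm{\bs{d}_k^n}_{H^2}\le C\trkla{1+1/\varepsilon}$ uniformly in $k$.

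Next I would extract subsequences. Reflexivity of $H^2$, $L^2$ and $H^1$ yields $\bs{d}_k^n\weak\bs{d}^n$ in $H^2$, $\bs{\mu}_k^n\weak\bs{\mu}^n$ in $L^2$, and $\bs{u}_k^n\weak\bs{u}^n$ in $H^1$, which already proves \eqref{eq:convergence:dh2}, \eqref{eq:convergence:mul2}, and \eqref{eq:convergence:uj1}; the limit $\bs{u}^n$ inherits solenoidality and mean-free periodicity, so $\bs{u}^n\in\Hdivpermean$. In space dimension $d\le3$ we have the compact embeddings $H^2\comp W^{1,p}$ for $p<6$ (and $H^2\comp C^0$) and $H^1\comp L^p$ for $p<6$, which upgrade these to the strong convergences \eqref{eq:convergence:dstrong} and \eqref{eq:convergence:ustrong}; in particular $\bs{d}_k^n\strongconv\bs{d}^n$ in $L^\infty$ and $\nabla\bs{d}_k^n\strongconv\nabla\bs{d}^n$ in $L^2$.

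The remaining and most delicate point is \eqref{eq:convergence:v}, and this is where I expect the main obstacle. The difficulty is that $\bs{\mu}_k^n$ is controlled only in $L^2$, so the distributional derivatives hidden in $\div\gkla{\bs{\mu}_k^n\otimes\bs{d}_k^n}$ and $\div\gkla{\bs{d}_k^n\otimes\bs{\mu}_k^n}$ cannot be estimated directly. Since the combination $\bs{v}_k^n$ is bounded in $L^2$, I would first extract a weak $L^2$-limit, and then identify it by testing against a smooth periodic $\bs{\psi}$ and integrating the divergence terms by parts (no boundary contributions by periodicity), reducing everything to pairings of the form $\iOmega\trkla{\bs{\mu}_k^n\otimes\bs{d}_k^n}:\nabla\bs{\psi}$ and $\iOmega\trkla{\bs{\mu}_k^n\cdot\nabla\bs{d}_k^n}\cdot\bs{\psi}$. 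In each such pairing the factor multiplying $\bs{\mu}_k^n$ converges strongly in $L^2$ --- because $\bs{d}_k^n\strongconv\bs{d}^n$ in $L^\infty$ and $\nabla\bs{d}_k^n\strongconv\nabla\bs{d}^n$ in $L^2$ --- so the product of a weakly convergent and a strongly convergent factor passes to the limit, yielding the target expression in terms of $\bs{\mu}^n$ and $\bs{d}^n$. As the weak $L^2$-limit and the distributional limit coincide, this identifies $\bs{v}^n$ with the right-hand side of \eqref{eq:convergence:v} and shows it lies in $L^2_\per\trkla{\Omega}$, completing the proof.
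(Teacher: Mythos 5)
Your proposal is correct and follows essentially the same route as the paper: the convergences \eqref{eq:convergence:dh2}--\eqref{eq:convergence:ustrong} are obtained from the uniform bounds of Lemma \ref{lem:energy} and Lemma \ref{lem:h2} via weak compactness and compact Sobolev embeddings, and the nonlinear term \eqref{eq:convergence:v} is identified exactly as in the paper, by extracting a weak $L^2$-limit, testing against smooth periodic functions, integrating the divergence terms by parts, and combining the weak convergence of $\bs{\mu}_k\nb$ with the strong convergence of $\bs{d}_k\nn$ and $\nabla\bs{d}_k\nn$.
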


\begin{proof}
The convergence expressed in \eqref{eq:convergence:dh2}-\eqref{eq:convergence:ustrong} is a direct consequence of the bounds established in Lemma \ref{lem:energy} and Lemma \ref{lem:h2}. 
Concerning \eqref{eq:convergence:v}, the bound in Lemma \ref{lem:energy} provides the weak convergence towards some limit denoted by $\mathcal{V}$ which we have to identify with the right-hand side of \eqref{eq:convergence:v}.
Choosing $\bs{\theta}\in C_{\per}^\infty\trkla{{\Omega}}$, we compute
\begin{multline}
\iOmega\mathcal{V}\cdot\bs{\theta}\leftarrow \iOmega \ekla{\bs{\mu}_k\nb\cdot\nabla\bs{d}_k\nn+\alpha\div\gkla{\bs{\mu}_k\nb\otimes\bs{d}_k\nn}-\trkla{1-\alpha}\div\gkla{\bs{d}_k\nn\otimes\bs{\mu}_k\nb}}\cdot\bs{\theta}\\
=\iOmega \rkla{\bs{\mu}_k\nb\cdot\nabla\bs{d}_k\nn}\cdot\bs{\theta}- \alpha\iOmega\rkla{\bs{\mu}_k\nb\otimes\bs{d}_k\nn}:\nabla\bs{\theta}+\trkla{1-\alpha}\iOmega\rkla{\bs{d}_k\nn\otimes\bs{\mu}_k\nb}:\nabla\bs{\theta}\\
\rightarrow \iOmega \rkla{\bs{\mu}\nb\cdot\nabla\bs{d}\nn}\cdot\bs{\theta}- \alpha\iOmega\rkla{\bs{\mu}\nb\otimes\bs{d}\nn}:\nabla\bs{\theta}+\trkla{1-\alpha}\iOmega\rkla{\bs{d}\nn\otimes\bs{\mu}\nb}:\nabla\bs{\theta}\\
=\iOmega\ekla{\bs{\mu}\nb\cdot\nabla\bs{d}\nn+\alpha\div\gkla{\bs{\mu}\nb\otimes\bs{d}\nn}-\trkla{1-\alpha}\div\gkla{\bs{d}\nn\otimes\bs{\mu}\nb}}\cdot\bs{\theta}\,.
\end{multline}
\end{proof}

These results allow to pass to the limit in \eqref{eq:discrete}.
We again use the abbreviation $\bs{v}\nn:=\bs{\mu}\nb\cdot\nabla\bs{d}\nn+\alpha\div\gkla{\bs{\mu}\nb\otimes\bs{d}\nn}-\trkla{1-\alpha}\div\gkla{\bs{d}\nn\otimes\bs{\mu}\nb}$ for the additional velocity and obtain
\begin{subequations}\label{eq:time}
\begin{multline}\label{eq:time:d}
\iOmega\rkla{\bs{d}\nn-\bs{d}\no}\cdot\bs{\theta}
+\tau\iOmega\ekla{\bs{u}\nn+\bs{v}\nn}\cdot\ekla{\bs{\theta}\cdot\nabla\bs{d}\nn+\alpha\div\gkla{\bs{\theta}\otimes\bs{d}\nn}-\trkla{1-\alpha}\div\gkla{\bs{d}\nn\otimes\bs{\theta}}}
\\
=-\varepsilon\tau\iOmega\bs{\mu}\nb\cdot\bs{\theta}\qquad\forall\bs{\theta}\in H^1_{\per}\trkla{\Omega}\,,
\end{multline}\todo{p.I. b.c. $\bs{d}\cdot\bs{n}=0$}
\begin{align}\label{eq:time:mu}
\iOmega \bs{\mu}\nb\cdot\bs{\theta}=-\iOmega\Delta\bs{d}\nn\cdot\bs{\theta}+\iOmega \bs{f}_+\trkla{\bs{d}\nn}\cdot\bs{\theta}+\iOmega\bs{f}_-\trkla{\bs{d}\no}\cdot\bs{\theta}\qquad\forall\bs{\theta}\in L^2_{\per}\trkla{\Omega}\,,
\end{align}\todo{p.I. b.c. $\nabla\bs{d}\cdot\bs{n}=0$}
\begin{multline}
\rho\iOmega\rkla{\bs{u}\nn-\bs{u}\no}\cdot\bs{w}+\tau\rho\iOmega\rkla{\rkla{\nabla\bs{u}\nn}\cdot\bs{u}\nn}\cdot\bs{w}+\tau\iOmega 2\eta\mb{D}\bs{u}\nn:\mb{D}\bs{w}\\
 -\tau\iOmega\rkla{\bs{\mu}\nb\cdot\nabla\bs{d}\nn}\cdot\bs{w}-\alpha\tau\iOmega\div\gkla{\bs{\mu}\nb\otimes\bs{d}\nn}\cdot\bs{w}+\trkla{1-\alpha}\tau\iOmega\div\gkla{\bs{d}\nn\otimes\bs{\mu}\nb}\cdot\bs{w} =0\\
\qquad\forall\bs{w}\in\Hdivper\,.\label{eq:time:u}
\end{multline}
\end{subequations}
\subsection{Passage to the limit $\tau\searrow0$}\label{subsec:limit}
A summation of the results of Lemma \ref{lem:energy} and Lemma \ref{lem:h2} over all time steps shows the following regularity results on $\Omega\times T$ for solutions to \eqref{eq:time}.
\begin{lemma}\label{lem:bounds}
Let initial data $\bs{d}^0\in H^1_{\per}\trkla{\Omega}$ and $\bs{u}^0\in\dot{L}^2_{\div,\per}\trkla{\Omega}$ be given. Let $\rkla{\bs{d}\nn}_{n=1,...,N}$, $\rkla{\bs{\mu}\nb}_{n=1,...,N}$, and $\rkla{\bs{u}\nn}_{n=1,...,N}$ be solutions to \eqref{eq:time} for $n=1,...,N$.
Then, the following estimates hold true:
\begin{subequations}
\begin{multline}
\max_{n=0,...,N}\iOmega\tfrac12\abs{\nabla\bs{d}^n}^2+\max_{n=0,...,N}\iOmega W\trkla{\bs{d}^n}+\max_{n=0,...,N}\iOmega\tfrac12\rho\abs{\bs{u}^n}^2\\
+\sum_{n=1}^N \iOmega\tfrac12\abs{\nabla\bs{d}\nn-\nabla\bs{d}\no}^2+\sum_{n=1}^N\iOmega\tfrac12\abs{\bs{d}\nn-\bs{d}\no}^2+\sum_{n=1}^N\iOmega\tfrac12\rho\abs{\bs{u}\nn-\bs{u}\no}^2\\
 +\sum_{n=1}^N \tau\iOmega\abs{\bs{\mu}\nb\cdot\nabla\bs{d}\nn +\alpha\div\gkla{\bs{\mu}\nb\otimes\bs{d}\nn}-\trkla{1-\alpha}\div\gkla{\bs{d}\nn\otimes\bs{\mu}\nb}}^2\\
+\varepsilon\sum_{n=1}^N\tau\iOmega\abs{\bs{\mu}\nb}^2 
+\sum_{n=1}^N\tau\iOmega\abs{\mb{D}\bs{u}\nn}^2
\leq \iOmega\tfrac12\abs{\nabla\bs{d}^0}^2 +\iOmega W\trkla{\bs{d}^0} +\iOmega\tfrac12\rho\abs{\bs{u}^0}^2\leq C\,,\label{eq:time:energy:result:a}
\end{multline}
\begin{align}
\sum_{n=1}^N\tau\norm{\bs{d}\nn}_{H^2\trkla{\Omega}}^2\leq C\trkla{1+\tfrac1\varepsilon}\,.\label{eq:time:energy:result:b}
\end{align}
\end{subequations}
\end{lemma}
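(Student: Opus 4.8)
The plan is to lift the single-step estimates of Lemma \ref{lem:energy} and Lemma \ref{lem:h2}, which are formulated for the Galerkin approximations $\trkla{\bs{d}_k\nn,\bs{\mu}_k\nb,\bs{u}_k\nn}$, to the solutions of \eqref{eq:time} and then to telescope them in $n$. For a fixed time step the convergences collected in Lemma \ref{lem:convergence:space} let me pass to the limit $k\to\infty$ in the inequality of Lemma \ref{lem:energy}. There the terms $\iOmega\tfrac12\abs{\nabla\bs{d}\nn}^2$, $\iOmega W\trkla{\bs{d}\nn}$ and $\iOmega\tfrac12\rho\abs{\bs{u}\nn}^2$ pass with equality, since \eqref{eq:convergence:dstrong} and \eqref{eq:convergence:ustrong} give strong convergence in $W^{1,p}_\per\trkla{\Omega}$ and $L^p_\per\trkla{\Omega}$ for $p<6$ (hence in $L^4_\per\trkla{\Omega}$, controlling the quartic $W$), whereas the remaining quadratic contributions — in particular $\tau\iOmega\abs{\bs{v}\nn}^2$, $\varepsilon\tau\iOmega\abs{\bs{\mu}\nb}^2$ and $\tau\iOmega2\eta\abs{\mb{D}\bs{u}\nn}^2$ — survive as lower bounds by weak lower semicontinuity of the $L^2$-norm, using \eqref{eq:convergence:v}, \eqref{eq:convergence:mul2} and \eqref{eq:convergence:uj1}. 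On the right-hand side the projections $\bs{d}_k\no,\bs{u}_k\no$ converge strongly to $\bs{d}\no,\bs{u}\no$, so the discrete energy $\iOmega\tfrac12\abs{\nabla\bs{d}\no}^2+\iOmega W\trkla{\bs{d}\no}+\iOmega\tfrac12\rho\abs{\bs{u}\no}^2=:\mathcal{E}\no$ is recovered. Writing $\mathcal{E}\nn$ for the same quantity at step $n$, this yields the single-step inequality $\mathcal{E}\nn+D\nn\le\mathcal{E}\no$ for solutions of \eqref{eq:time}, where $D\nn\ge0$ collects all dissipation and time-difference terms from the left of Lemma \ref{lem:energy}.

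Next I would sum $\mathcal{E}\nn+D\nn\le\mathcal{E}\no$ over $n=1,\dots,m$ for any $m\le N$; discarding the nonnegative $D\nn$ gives $\mathcal{E}^m\le\mathcal{E}^0$, and taking the maximum over $m$ bounds each of the three energy contributions on the left of \eqref{eq:time:energy:result:a}. Keeping instead the full left-hand side and summing up to $n=N$ bounds the accumulated dissipation $\sum_{n=1}^N D\nn\le\mathcal{E}^0$, which covers the remaining sums in \eqref{eq:time:energy:result:a}. Finiteness of the bounding constant follows from the data assumptions: $\bs{u}^0\in\dot{L}^2_{\div,\per}\trkla{\Omega}$ controls the kinetic part and $\bs{d}^0\in H^1_{\per}\trkla{\Omega}\hookrightarrow L^4_{\per}\trkla{\Omega}$ (for $d\in\tgkla{2,3}$) makes $\iOmega W\trkla{\bs{d}^0}$ finite, so $\mathcal{E}^0\le C$. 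In particular \eqref{eq:time:energy:result:a} provides the uniform-in-$\tau$ bound $\max_n\norm{\bs{d}\nn}_{H^1\trkla{\Omega}}\le C$, since the gradient term is controlled directly while the double-well $W$ controls $\norm{\bs{d}\nn}_{L^4\trkla{\Omega}}$ and hence $\norm{\bs{d}\nn}_{L^2\trkla{\Omega}}$.

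For \eqref{eq:time:energy:result:b} I would transfer Lemma \ref{lem:h2} to the limit — its left-hand side is weakly lower semicontinuous under $\bs{d}_k\nn\rightharpoonup\bs{d}\nn$ in $H^2_\per\trkla{\Omega}$, and its right-hand side is continuous under the strong $H^1$-convergence and the weak $L^2$-convergence of $\bs{\mu}_k\nb$ — obtaining $\norm{\bs{d}\nn}_{H^2\trkla{\Omega}}^2\le C\trkla{1+\norm{\bs{\mu}\nb}_{L^2\trkla{\Omega}}^2+\norm{\bs{d}\nn}_{H^1\trkla{\Omega}}^6+\norm{\bs{d}\no}_{L^2\trkla{\Omega}}^2}$. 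Multiplying by $\tau$ and summing, the constant term contributes $T$, the sextic term is bounded by $\rkla{\max_n\norm{\bs{d}\nn}_{H^1\trkla{\Omega}}^2}^3\,T\le C$ and the last term by $\rkla{\max_n\norm{\bs{d}\no}_{L^2\trkla{\Omega}}^2}\,T\le C$ by the uniform bounds just derived, while $\sum_{n=1}^N\tau\norm{\bs{\mu}\nb}_{L^2\trkla{\Omega}}^2\le C\varepsilon^{-1}$ is precisely the term $\varepsilon\sum_{n=1}^N\tau\iOmega\abs{\bs{\mu}\nb}^2$ controlled in \eqref{eq:time:energy:result:a}, divided by $\varepsilon$; this is the sole source of the factor $\trkla{1+\tfrac1\varepsilon}$. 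The step needing the most care is the passage to the limit in $k$: the regularizing potential $\bs{\mu}\nb$ and the additional velocity $\bs{v}\nn$ are available only as weak $L^2$-limits, so their dissipation terms can be retained merely as lower bounds, and one must verify that no such lower-semicontinuous term ends up on the right-hand side of the inequality; once $\mathcal{E}\nn+D\nn\le\mathcal{E}\no$ is secured with the correct sign pattern, the remaining summation is routine.
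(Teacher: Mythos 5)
Your argument for \eqref{eq:time:energy:result:a} is sound but follows a genuinely different route from the paper. The paper simply repeats the testing procedure of Lemma \ref{lem:energy} at the time-discrete level: since the limit system \eqref{eq:time} admits test functions from the full spaces $H^1_\per\trkla{\Omega}$ and $L^2_\per\trkla{\Omega}$, one tests \eqref{eq:time:d} by $\bs{\mu}\nb$, \eqref{eq:time:mu} by $\bs{d}\nn-\bs{d}\no$, and \eqref{eq:time:u} by $\bs{u}\nn$, and sums over $n$; this yields the estimate directly, with no $\liminf$ bookkeeping. You instead pass to the limit $k\to\infty$ in the Galerkin inequality of Lemma \ref{lem:energy}, keeping the strongly convergent energy terms with equality and the quadratic dissipation terms as lower bounds via weak lower semicontinuity, and then telescope. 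Both work; your version has the modest advantage that it never needs to insert $\bs{\mu}\nb$ (which is only in $L^2_\per\trkla{\Omega}$) as a test function into \eqref{eq:time:d}, at the price of having to check, as you correctly flag, that every lower-semicontinuous term sits on the favourable side of the inequality. The price is also that you must track the strong convergence of the projections $\bs{d}_k\no,\bs{u}_k\no$ and of $W\trkla{\bs{d}_k^{\trkla{\cdot}}}$ in $L^1$, which the paper's direct computation avoids entirely.

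There is, however, one step in your treatment of \eqref{eq:time:energy:result:b} that does not go through as stated: you claim the right-hand side of the estimate in Lemma \ref{lem:h2} is ``continuous under \ldots the weak $L^2$-convergence of $\bs{\mu}_k\nb$.'' Weak convergence only gives $\norm{\bs{\mu}\nb}_{L^2\trkla{\Omega}}\leq\liminf_k\norm{\bs{\mu}_k\nb}_{L^2\trkla{\Omega}}$, which is the wrong direction when the quantity appears on the \emph{right}-hand side of an upper bound; a priori $\liminf_k\norm{\bs{\mu}_k\nb}_{L^2\trkla{\Omega}}^2$ is only controlled by the $k$-uniform bound $C/\trkla{\varepsilon\tau}$ from Lemma \ref{lem:energy}, and feeding that into your summation produces $CN/\varepsilon$, which is not uniform in $\tau$. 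The repair is exactly the paper's argument: do not transfer Lemma \ref{lem:h2} by semicontinuity, but apply elliptic regularity directly to the limit equation \eqref{eq:time:mu}, read as a Poisson problem $-\Delta\bs{d}\nn=\bs{\mu}\nb-\bs{f}_+\trkla{\bs{d}\nn}-\bs{f}_-\trkla{\bs{d}\no}$ with right-hand side in $L^2_\per\trkla{\Omega}$. This gives $\norm{\bs{d}\nn}_{H^2\trkla{\Omega}}^2\leq C\trkla{1+\norm{\bs{\mu}\nb}_{L^2\trkla{\Omega}}^2+\norm{\bs{d}\nn}_{H^1\trkla{\Omega}}^6+\norm{\bs{d}\no}_{L^2\trkla{\Omega}}^2}$ with the \emph{limit} $\bs{\mu}\nb$ on the right, after which your summation argument, using $\varepsilon\sum_{n}\tau\norm{\bs{\mu}\nb}_{L^2\trkla{\Omega}}^2\leq C$ from \eqref{eq:time:energy:result:a}, yields \eqref{eq:time:energy:result:b} exactly as you describe.
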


\begin{proof}
Similar to the proof of Lemma \ref{lem:energy}, we test \eqref{eq:time:d} by $\bs{\mu}\nn$ and \eqref{eq:time:mu} by $\trkla{\bs{d}\nn-\bs{d}\no}$.
Summing over all time-steps provides \eqref{eq:time:energy:result:a}.
\eqref{eq:time:energy:result:b} can be shown similarly to Lemma \ref{lem:h2} by writing \eqref{eq:time:mu} as a Poisson equation for $\bs{d}\nn$.
The estimates in \eqref{eq:time:energy:result:a} show that the corresponding right-hand side is in $L^2\trkla{0,T;L^2_{\per}\trkla{\Omega}}$ which completes the proof.
\end{proof}

\begin{lemma}
Let $\rkla{\bs{d}\nn}_{n=1,...,N}$, $\rkla{\bs{\mu}\nb}_{n=1,...,N}$, and $\rkla{\bs{u}\nn}_{n=1,...,N}$ be solutions to \eqref{eq:time} for $n=1,...,N$ and given initial data $\bs{d}^0\in H_{\per}^1\trkla{\Omega}$ and $\bs{u}^0\in \dot{L}^2_{0,\div}\trkla{\Omega}$. Then
\begin{subequations}
\begin{align}
\tau\sum_{n=1}^N\norm{\dtau\bs{d}\nn}_{\trkla{H^1_\per\trkla{\Omega}}^\prime}^{4/3}&\leq C\trkla{1+\tfrac1\varepsilon}\,,\label{eq:compactness:d}\\
\tau\sum_{n=1}^N\norm{\dtau\bs{u}\nn}_{\trkla{\Hdivper}^\prime}^{4/3}&\leq C\,.\label{eq:compactness:u}
\end{align} 
\end{subequations}
\end{lemma}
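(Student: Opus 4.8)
The plan is to read both difference quotients off the time-discrete system \eqref{eq:time}: dividing \eqref{eq:time:d} and \eqref{eq:time:u} by $\tau$ turns their left-hand sides into $\iOmega\dtau\bs{d}\nn\cdot\bs{\theta}$ and $\rho\iOmega\dtau\bs{u}\nn\cdot\bs{w}$, so that estimating the remaining terms against test functions of unit $H^1$-norm yields dual-norm bounds for $\dtau\bs{d}\nn$ and $\dtau\bs{u}\nn$. I would then raise these bounds to the power $4/3$ and sum over $n$, relying exclusively on the uniform estimates \eqref{eq:time:energy:result:a} and the $H^2$-control from Lemma~\ref{lem:h2}. The exponent $4/3$ is forced by the three-dimensional convective term and is shared by both quantities on purpose.

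For \eqref{eq:compactness:u} I would test \eqref{eq:time:u} with $\bs{w}\in\Hdivper$, $\norm{\bs{w}}_{H^1\trkla{\Omega}}\le 1$; since $\bs{w}$ is solenoidal and periodic the pressure drops out and the convective term integrates by parts to $\abs{\iOmega\trkla{\bs{u}\nn\cdot\nabla}\bs{u}\nn\cdot\bs{w}}\le\norm{\bs{u}\nn}_{L^4\trkla{\Omega}}^2$. Bounding the viscous term by $C\norm{\bs{u}\nn}_{H^1\trkla{\Omega}}$ (using Korn's inequality to turn the bound on $\mb{D}\bs{u}\nn$ from \eqref{eq:time:energy:result:a} into an $H^1$-bound) and the orientation force by $C\norm{\bs{v}\nn}_{L^2\trkla{\Omega}}$, I obtain
\[
\norm{\dtau\bs{u}\nn}_{\trkla{\Hdivper}^\prime}\le C\rkla{\norm{\bs{u}\nn}_{L^4\trkla{\Omega}}^2+\norm{\bs{u}\nn}_{H^1\trkla{\Omega}}+\norm{\bs{v}\nn}_{L^2\trkla{\Omega}}}.
\]
Raising to the power $4/3$ and summing, the interpolation $\norm{\bs{u}\nn}_{L^4\trkla{\Omega}}^{8/3}\le C\norm{\bs{u}\nn}_{L^2\trkla{\Omega}}^{2/3}\norm{\bs{u}\nn}_{H^1\trkla{\Omega}}^2$ (in three dimensions) together with $\max_n\norm{\bs{u}\nn}_{L^2\trkla{\Omega}}\le C$ reduces the convective part to $\sum_n\tau\norm{\bs{u}\nn}_{H^1\trkla{\Omega}}^2\le C$, while the two remaining parts are controlled by a discrete Hölder inequality in time from $\sum_n\tau\norm{\bs{u}\nn}_{H^1\trkla{\Omega}}^2\le C$ and $\sum_n\tau\norm{\bs{v}\nn}_{L^2\trkla{\Omega}}^2\le C$. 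As none of these inputs depends on $\eps$, the right-hand side of \eqref{eq:compactness:u} is $\eps$-independent.

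For \eqref{eq:compactness:d} I would test \eqref{eq:time:d} with $\bs{\theta}\in H^1_\per\trkla{\Omega}$, $\norm{\bs{\theta}}_{H^1\trkla{\Omega}}\le 1$. Expanding the divergences by the product rule, every transport term is of the form $\iOmega\rkla{\bs{u}\nn+\bs{v}\nn}\cdot\rkla{\cdots}$ in which exactly one derivative falls on either $\bs{\theta}$ or $\bs{d}\nn$. The delicate point is that $\bs{u}\nn+\bs{v}\nn$ is only controlled in $L^2\trkla{\Omega}$, because $\bs{v}\nn$ has no better integrability; I therefore close each term by Hölder with $\bs{\theta}\in L^6\trkla{\Omega}$ and $\bs{d}\nn\in L^\infty\trkla{\Omega}\cap W^{1,3}\trkla{\Omega}$, both of which are dominated by $\norm{\bs{d}\nn}_{H^2\trkla{\Omega}}$ for $d\in\tgkla{2,3}$. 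Together with the regularizing term $\eps\bs{\mu}\nb$ this gives
\[
\norm{\dtau\bs{d}\nn}_{\trkla{H^1_\per\trkla{\Omega}}^\prime}\le C\norm{\bs{d}\nn}_{H^2\trkla{\Omega}}\rkla{\norm{\bs{u}\nn}_{L^2\trkla{\Omega}}+\norm{\bs{v}\nn}_{L^2\trkla{\Omega}}}+\eps\norm{\bs{\mu}\nb}_{L^2\trkla{\Omega}}.
\]
Inserting the pointwise bound $\norm{\bs{d}\nn}_{H^2\trkla{\Omega}}\le C\trkla{1+\tfrac1\eps}^{1/2}$ from Lemma~\ref{lem:h2}, raising to the power $4/3$ and summing, the first contribution is dominated by $C\trkla{1+\tfrac1\eps}^{2/3}\sum_n\tau\rkla{\norm{\bs{u}\nn}_{L^2\trkla{\Omega}}+\norm{\bs{v}\nn}_{L^2\trkla{\Omega}}}^{4/3}\le C\trkla{1+\tfrac1\eps}$, while the regularization contributes only $\eps^{4/3}\sum_n\tau\norm{\bs{\mu}\nb}_{L^2\trkla{\Omega}}^{4/3}\le C\eps^{2/3}$ once the bound $\eps\sum_n\tau\norm{\bs{\mu}\nb}_{L^2\trkla{\Omega}}^2\le C$ from \eqref{eq:time:energy:result:a} is used.

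The main obstacle is precisely the director estimate: in contrast to the Navier--Stokes part, the transport field carries the orientation contribution $\bs{v}\nn$, which is only square-integrable, so the full $H^2$-regularity of $\bs{d}\nn$ must be spent to absorb the derivative landing on $\bs{\theta}$, and the factor $\trkla{1+\tfrac1\eps}$ is therefore genuine. The bookkeeping has to be arranged so that this $H^2$-bound enters only once — either pointwise in $n$ as above or through the summed estimate \eqref{eq:time:energy:result:b} — in order not to lose more than a single power of $\tfrac1\eps$. Finally, the value $4/3$ is not dictated by the director equation alone; it is chosen to coincide with the Navier--Stokes exponent, so that $\dtau\bs{d}\nn$ and $\dtau\bs{u}\nn$ lie in the same discrete $L^{4/3}$-in-time dual space, which is exactly what the subsequent Aubin--Lions-type compactness argument requires.
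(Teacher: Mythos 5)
Your overall strategy coincides with the paper's: divide the time-discrete equations by $\tau$, estimate the right-hand sides against unit-norm test functions, raise to the power $4/3$, and sum in time using the energy estimate and the $H^2$-control of $\bs{d}\nn$. The Navier--Stokes half of your argument is correct and essentially identical to the paper's (whether one integrates the convective term by parts or interpolates $\norm{\bs{u}\nn}_{L^3\trkla{\Omega}}$ directly is immaterial), and your observation that \eqref{eq:compactness:u} is $\varepsilon$-independent is accurate.

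The director half, however, has a genuine gap in how the $H^2$-norm is budgeted. You bound $\norm{\bs{d}\nn}_{L^\infty\trkla{\Omega}}$ and $\norm{\nabla\bs{d}\nn}_{L^3\trkla{\Omega}}$ by the \emph{full} norm $\norm{\bs{d}\nn}_{H^2\trkla{\Omega}}$ and then invoke the pointwise-in-$n$ bound $\norm{\bs{d}\nn}_{H^2\trkla{\Omega}}^2\leq C\trkla{1+\tfrac1\varepsilon}$ from Lemma \ref{lem:h2}. But the only control on $\norm{\bs{\mu}\nb}_{L^2\trkla{\Omega}}$ at a \emph{fixed} time level comes from the energy estimate, which yields $\varepsilon\tau\norm{\bs{\mu}\nb}_{L^2\trkla{\Omega}}^2\leq C$ and hence only $\norm{\bs{d}\nn}_{H^2\trkla{\Omega}}^2\leq C\trkla{1+\tfrac{1}{\varepsilon\tau}}$ uniformly in $\tau$; the second inequality in \eqref{eq:d:h2} is not $\tau$-uniform, and Lemma \ref{lem:h2} is only needed (and only safe) at fixed $\tau$ for the Galerkin limit. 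Your argument therefore produces $C\trkla{1+\tfrac{1}{\varepsilon\tau}}^{2/3}$, which blows up as $\tau\searrow0$. Your proposed fallback --- routing the $H^2$-norm through the summed estimate \eqref{eq:time:energy:result:b} --- does not repair this as stated, because with the full power $\norm{\bs{d}\nn}_{H^2\trkla{\Omega}}^{4/3}$ a discrete H\"older inequality would require $\sum_n\tau\norm{\bs{v}\nn}_{L^2\trkla{\Omega}}^4$, whereas the energy estimate only controls $\sum_n\tau\norm{\bs{v}\nn}_{L^2\trkla{\Omega}}^2$. The paper avoids this by interpolating (Gagliardo--Nirenberg/Agmon together with $\max_n\norm{\bs{d}\nn}_{H^1\trkla{\Omega}}\leq C$) so that only $\norm{\bs{d}\nn}_{H^2\trkla{\Omega}}^{1/2}$ appears; after raising to the power $4/3$, Young's inequality with exponents $3$ and $3/2$ splits the product into $\norm{\bs{d}\nn}_{H^2\trkla{\Omega}}^{2}$, which is summable by \eqref{eq:time:energy:result:b} with the factor $\trkla{1+\tfrac1\varepsilon}$, and $\rkla{\norm{\bs{u}\nn}_{L^2\trkla{\Omega}}+\norm{\bs{v}\nn}_{L^2\trkla{\Omega}}}^{2}$, which is summable by \eqref{eq:time:energy:result:a}. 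With that interpolation step inserted, your argument closes and reproduces \eqref{eq:compactness:d}.
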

\begin{proof}
For $\bs{\theta}\in H^1_\per\trkla{\Omega}$, we have from \eqref{eq:time:d}
\begin{align}
\begin{split}
\abs{\iOmega\dtau\bs{d}\nn\cdot\bs{\theta}}\leq&\abs{\iOmega\ekla{\bs{u}\nn+\bs{v}\nn}\cdot\ekla{\bs{\theta}\cdot\nabla\bs{d}\nn+\alpha\div\gkla{\bs{\theta}\otimes\bs{d}\nn}-\trkla{1-\alpha}\div\gkla{\bs{d}\nn\otimes\bs{\theta}}}}\\
&+\varepsilon\abs{\iOmega\bs{\mu}\nb\cdot\bs{\theta}}\\
=:&I+II\,.
\end{split}
\end{align}
From Hölder's inequality and the Gagliardo--Nirenberg inequality, we obtain
\begin{align}\label{eq:tmp:hprime:1}
\begin{split}
I\leq & \norm{\bs{u}\nn+\bs{v}\nn}_{L^2\trkla{\Omega}}\rkla{\norm{\nabla\bs{d}\nn}_{L^3\trkla{\Omega}}\norm{\bs{\theta}}_{L^6\trkla{\Omega}} + \norm{\nabla\bs{\theta}}_{L^2\trkla{\Omega}}\norm{\bs{d}\nn}_{L^\infty\trkla{\Omega}}}\\
&+ \norm{\bs{u}\nn+\bs{v}\nn}_{L^2\trkla{\Omega}}\norm{\bs{\theta}}_{L^6\trkla{\Omega}}\norm{\nabla\bs{d}\nn}_{L^3\trkla{\Omega}}\\
\leq&C\norm{\bs{u}\nn+\bs{v}\nn}_{L^2\trkla{\Omega}}\!\rkla{\norm{\bs{d}\nn}_{H^2\trkla{\Omega}}^{1/2}\!+\!\norm{\nabla\bs{d}\nn}_{L^4\trkla{\Omega}}\! +\!\norm{\nabla\bs{d}\nn}_{L^3\trkla{\Omega}}\!+\!\norm{\bs{d}\nn}_{H^1\trkla{\Omega}} }\!\norm{\bs{\theta}}_{H^1\trkla{\Omega}}\\
\leq &C\rkla{\norm{\bs{u}\nn}_{L^2\trkla{\Omega}}+\norm{\bs{v}\nn}_{L^2\trkla{\Omega}}}\rkla{1+\norm{\bs{d}\nn}_{H^2\trkla{\Omega}}^{1/2}}\norm{\bs{\theta}}_{H^1\trkla{\Omega}}\,.
\end{split}
\end{align}
Combining \eqref{eq:tmp:hprime:1} with $\tabs{\iOmega\bs{\mu}\nn\cdot\bs{\theta}}\leq \norm{\bs{\mu}\nn}_{L^2\trkla{\Omega}}\norm{\bs{\theta}}_{H^1\trkla{\Omega}}$ and taken the $4/3$ power provides
\begin{align}
\norm{\dtau\bs{d}\nn}_{\trkla{H^1\trkla{\Omega}}^\prime}^{4/3} \leq C\rkla{\norm{\bs{u}\nn}_{L^2\trkla{\Omega}} +\norm{\bs{v}\nn}_{L^2\trkla{\Omega}}}^{4/3}\rkla{1+\norm{\bs{d}\nn}^{1/2}_{H^2\trkla{\Omega}}}^{4/3} +\varepsilon\norm{\bs{\mu}\nn}_{L^2\trkla{\Omega}}^{4/3}\,.
\end{align}
Applying Young's inequality with exponents $3/2$ and $3$ shows
\begin{multline}
\rkla{\norm{\bs{u}\nn}_{L^2\trkla{\Omega}} +\norm{\bs{v}\nn}_{L^2\trkla{\Omega}}}^{4/3}\norm{\bs{d}\nn}_{H^2\trkla{\Omega}}^{2/3}\\\leq C\rkla{\norm{\bs{u}\nn}_{L^2\trkla{\Omega}} +\norm{\bs{v}\nn}_{L^2\trkla{\Omega}}}^2 + C\norm{\bs{d}\nn}_{H^2\trkla{\Omega}}^2\,.
\end{multline}
Therefore, we have
\begin{multline}
\norm{\dtau\bs{d}\nn}_{\trkla{H^1\trkla{\Omega}}^\prime}^{4/3} \leq C\norm{\bs{u}\nn}_{H^1\trkla{\Omega}}^2+ C\norm{\bs{d}\nn}_{H^2\trkla{\Omega}}^2+C\varepsilon\norm{\bs{\mu}\nb}_{L^2\trkla{\Omega}}^2+C\\
+ C\norm{\bs{\mu}\nb\cdot\nabla\bs{d}\nn+\alpha\div\gkla{\bs{\mu}\nb\otimes\bs{d}\nn}-\trkla{1-\alpha}\div\gkla{\bs{d}\nn\otimes\bs{\mu}\nb}}_{L^2\trkla{\Omega}}^2 
\,.
\end{multline}
Multiplying by $\tau$, summing over all time steps and applying the results of Lemma \ref{lem:bounds} provides \eqref{eq:compactness:d}.\\
To obtain the second part, we choose $\bs{w}\in \Hdivper$ and compute
\begin{align}
\begin{split}
\rho\abs{\iOmega\dtau\bs{u}\nn\cdot\bs{w}}\leq& \rho\abs{\iOmega\rkla{\nabla\bs{u}\nn\cdot\bs{u}\nn}\cdot\bs{w}} + 2\eta\abs{\iOmega\mb{D}\bs{u}\nn:\mb{D}\bs{w}} \\
&+\abs{\iOmega\ekla{\bs{\mu}\nb\cdot\nabla\bs{d}\nn+\alpha\div\gkla{\bs{\mu}\nb\otimes\bs{d}\nn}-\trkla{1-\alpha}\div\gkla{\bs{d}\nn\otimes\bs{\mu}\nb}}\cdot\bs{w}}\\
=:& I+II+III\,.
\end{split}
\end{align}
Applying Hölder's inequality and the Gagliardo--Nirenberg inequality together with standard embedding theorems yields
\begin{align}
I\leq& \rho\norm{\nabla\bs{u}\nn}_{L^2\trkla{\Omega}}\norm{\bs{u}\nn}_{L^3\trkla{\Omega}}\norm{\bs{w}}_{L^6\trkla{\Omega}}\leq C\rho \norm{\nabla\bs{u}\nn}_{L^2\trkla{\Omega}}\norm{\bs{w}}_{H^1\trkla{\Omega}}\norm{\bs{u}\nn}^{1/2}_{H^1\trkla{\Omega}}\,,\\
II\leq& C\eta\norm{\bs{u}\nn}_{H^1\trkla{\Omega}}\norm{\bs{w}}_{H^1\trkla{\Omega}}\,,\\
III\leq &\norm{\bs{\mu}\nb\cdot\nabla\bs{d}\nn+\alpha\div\gkla{\bs{\mu}\nb\otimes\bs{d}\nn}-\trkla{1-\alpha}\div\gkla{\bs{d}\nn\otimes\bs{\mu}\nb}}_{L^2\trkla{\Omega}}\norm{\bs{w}}_{H^1\trkla{\Omega}}\,.
\end{align}
From Young's inequality, we obtain
\begin{align}
\begin{split}
\norm{\dtau\bs{u}\nn}_{\trkla{\Hdivper}^\prime}^{4/3}\leq&\, C\norm{\bs{u}\nn}^2_{H^1\trkla{\Omega}}\\
&+\norm{\bs{\mu}\nb\cdot\nabla\bs{d}\nn+\alpha\div\gkla{\bs{\mu}\nb\otimes\bs{d}\nn}-\trkla{1-\alpha}\div\gkla{\bs{d}\nn\otimes\bs{\mu}\nb}}_{L^2\trkla{\Omega}}^2\,.
\end{split}
\end{align}
Again, multiplying by $\tau$, summing over all time steps and applying the results of Lemma \ref{lem:bounds} provides \eqref{eq:compactness:u}.
\end{proof}

With these results, we pass to the limit $\tau\searrow 0$.
For this purpose, we define time-interpolants of time-discrete functions $a\nn$, $\n=0,...,N$, and introduce some time-index-free notation as follows.
\begin{subequations}
\begin{align}
a\tl\trkla{.,t}&:=\tfrac{t-t\no}{\tau}a\nn\trkla{.}+\tfrac{t\nn-t}{\tau} a\no\trkla{.} &&t\in\tekla{t\no,t\nn},n\geq1\,,\\
a\tp\trkla{.,t}&:=a\nn\trkla{.},\ \ a\tm\trkla{.,t}:=a\no\trkla{.}&&t\in(t\no,t\nn], n\geq1\,.
\end{align}
\end{subequations}
We want to point out that the time derivative of $a\tl$ coincides with the difference quotient, i.e.
\begin{align}
\para{t} a\tl= \para{t}\rkla{\tfrac{t-t\no}{\tau}a\nn+\tfrac{t\nn-t}{\tau} a\no} = \tfrac1\tau a\nn-\tfrac1\tau a\no =\dtau a\nn\,.
\end{align}
If a statement is valid for $a\tl$, $a\tp$, and $a\tm$, we use the abbreviation $a\tpm$.
With this notation, our system reads as follows.
\begin{subequations}\label{eq:cont}
\begin{multline}\label{eq:cont:d}
\iOmegaT\para{t}\bs{d}\tl\cdot\bs{\theta}
+ \iOmegaT\ekla{\bs{u}\tp+\bs{v}\tp}\cdot\ekla{\bs{\theta}\cdot\nabla\bs{d}\tp+\alpha\div\gkla{\bs{\theta}\otimes\bs{d}\tp}-\trkla{1-\alpha}\div\gkla{\bs{d}\tp\otimes\bs{\theta}}}\\
=-\varepsilon\iOmegaT\bs{\mu}\tp\cdot\bs{\theta}\\
\forall\bs{\theta}\in L^2\trkla{0,T; H^1\trkla{\Omega}}\,,
\end{multline}\todo{p.I. b.c. $\bs{d}\cdot\bs{n}=0$}
with $\bs{v}\tp:=\bs{\mu}\tp\cdot\nabla\bs{d}\tp+\alpha\div\gkla{\bs{\mu}\tp\otimes\bs{d}\tp}-\trkla{1-\alpha}\div\gkla{\bs{d}\tp\otimes\bs{\mu}\tp}$,
\begin{multline}\label{eq:cont:mu}
\iOmegaT \bs{\mu}\tp\cdot\bs{\theta}=-\iOmegaT\Delta\bs{d}\tp\cdot\bs{\theta}+\iOmegaT \bs{f}_+\trkla{\bs{d}\tp}\cdot\bs{\theta}+\iOmegaT\bs{f}_-\trkla{\bs{d}\tm}\cdot\bs{\theta}\\
\forall\bs{\theta}\in L^2\trkla{0,T;L^2\trkla{\Omega}}\,,
\end{multline}\todo{p.I. b.c. $\nabla\bs{d}\cdot\bs{n}=0$}
\begin{multline}
\rho\iOmegaT\para{t}\bs{u}\tl\cdot\bs{w}+\rho\iOmegaT\rkla{\rkla{\nabla\bs{u}\tp}\cdot\bs{u}\tp}\cdot\bs{w}+\iOmegaT 2\eta\mb{D}\bs{u}\tp:\mb{D}\bs{w}\\
 -\iOmegaT\rkla{\bs{\mu}\tp\cdot\nabla\bs{d}\tp}\cdot\bs{w}-\alpha\iOmegaT\div\gkla{\bs{\mu}\tp\otimes\bs{d}\tp}\cdot\bs{w}\\
+\trkla{1-\alpha}\iOmegaT\div\gkla{\bs{d}\tp\otimes\bs{\mu}\tp}\cdot\bs{w} =0\\\
\forall\bs{w}\in L^{2}\trkla{0,T;\Hdivper}\,.\label{eq:cont:u}
\end{multline}
\end{subequations}
The bounds established above read
\begin{subequations}\label{eq:bounds:cont}
\begin{multline}
\norm{\bs{d}\tpm}_{L^\infty\trkla{0,T;H^1\trkla{\Omega}}}
+\varepsilon\norm{\bs{\mu}\tp}_{L^2\trkla{0,T;L^2\trkla{\Omega}}} 
+\norm{\bs{u}\tpm}_{L^\infty\trkla{0,T;L^2\trkla{\Omega}}}\\
+ \norm{\bs{\mu}\tp\cdot\nabla\bs{d}\tp +\alpha\div\gkla{\bs{\mu}\tp\otimes\bs{d}\tp} -\trkla{1-\alpha} \div\gkla{\bs{d}\tp\otimes\bs{\mu}\tp} }_{L^2\trkla{0,T;L^2\trkla{\Omega}}}\\
 +\norm{\bs{u}\tp}_{L^2\trkla{0,T;H^1\trkla{\Omega}}}
+\tau^{-1/2}\norm{\nabla\bs{d}\tp-\nabla\bs{d}\tm}_{L^2\trkla{0,T;L^2\trkla{\Omega}}}\\
+\tau^{-1/2}\norm{\bs{d}\tp-\bs{d}\tm}_{L^2\trkla{0,T;L^2\trkla{\Omega}}}
+\tau^{-1/2}\norm{\bs{u}\tp-\bs{u}\tm}_{L^2\trkla{0,T;L^2\trkla{\Omega}}}
\leq  C\,,
\end{multline}
\begin{align}
\norm{\bs{d}\tp}_{L^2\trkla{0,T;H^2\trkla{\Omega}}}&\leq C\trkla{1+\tfrac1\varepsilon}\,,\\
\norm{\para{t}\bs{d}\tl}_{L^{4/3}\trkla{0,T;\trkla{H^1_\per\trkla{\Omega}}^\prime}}&\leq C\trkla{1+\tfrac1\varepsilon}\,,\\
\norm{\para{t}\bs{u}\tl}_{L^{4/3}\trkla{0,T;\trkla{\Hdivper}^\prime}}&\leq C\,.
\end{align} 
\end{subequations}
These bounds give rise to the convergence results stated in the following lemma.
\begin{lemma}
There exists a subsequence -- again denoted by $\trkla{\bs{d}\tpm,\bs{\mu}\tp,\bs{u}\tpm}_\tau$ -- and functions
\begin{align}
\bs{d}\in&L^\infty\trkla{0,T;H^1\trkla{\Omega}}\cap L^2\trkla{0,T;H^2\trkla{\Omega}}\cap W^{1,4/3}\trkla{0,T;\trkla{H^1\trkla{\Omega}}^\prime}\,,\\
\bs{\mu}\in&L^2\trkla{0,T;L^2\trkla{\Omega}}\,,\\
\bs{u}\in& L^\infty\trkla{0,T;L^2\trkla{\Omega}}\cap L^2\trkla{0,T;\Hdivpermean}\cap W^{1,4/3}\trkla{0,T;\trkla{\Hdivper}^\prime}\,,
\end{align}
such that for $\tau\searrow0$
\begin{subequations}
\begin{align}
\bs{d}\tpm\stackrel{*}{\rightharpoonup}& \bs{d} &&\text{in } L^\infty\trkla{0,T;H^1_\per\trkla{\Omega}}\,,\\
\bs{d}\tp\rightharpoonup&\bs{d}&&\text{in } L^2\trkla{0,T;H^2_\per\trkla{\Omega}}\,,\\
\bs{d}\tp\rightarrow& \bs{d}&&\text{in } L^p\trkla{0,T;L^s_\per\trkla{\Omega}} \text{ with } p<\infty,~s\in[1,\tfrac{2d}{d-2})\,,\\
\bs{d}\tp\rightarrow&\bs{d}&&\text{in }L^2\trkla{0,T;W^{1,s}_\per\trkla{\Omega}}\text{ with } s\in[1,\tfrac{2d}{d-2})\,,\\
\para{t}\bs{d}\tl\rightharpoonup&\para{t}\bs{d}&&\text{in }L^{4/3}\trkla{0,T;\trkla{H^1_\per\trkla{\Omega}}^\prime}\,,\\
\bs{\mu}\tp\rightharpoonup&\bs{\mu}&&\text{in }L^2\trkla{0,T;L^2_\per\trkla{\Omega}}\,,\\
\bs{u}\tpm\stackrel{*}{\rightharpoonup}&\bs{u}&&\text{in }L^\infty\trkla{0,T;L^2_\per\trkla{\Omega}}\,,\\
\bs{u}\tp\rightharpoonup&\bs{u}&&\text{in } L^2\trkla{0,T;\Hdivpermean}\,,\\
\bs{u}\tp\rightarrow&\bs{u}&&\text{in } L^2\trkla{0,T;L^s_\per\trkla{\Omega}}\text{ with } s\in[1,\tfrac{2d}{d-2})\,,\\
\para{t}\bs{u}\tl\rightharpoonup&\para{t}\bs{u}&&\text{in } L^{4/3}\trkla{0,T;\trkla{\Hdivper}^\prime}\,,
\end{align}
and
\begin{multline}
\ekla{\bs{\mu}\tp\cdot\nabla\bs{d}\tp+\alpha\div\gkla{\bs{\mu}\tp\otimes\bs{d}\tp}-\trkla{1-\alpha}\div\gkla{\bs{d}\tp\otimes\bs{\mu}\tp}}\\
\rightharpoonup \ekla{\bs{\mu}\cdot\nabla\bs{d}+\alpha\div\gkla{\bs{\mu}\otimes\bs{d}}-\trkla{1-\alpha}\div\gkla{\bs{d}\otimes\bs{\mu}}}
\end{multline}
\end{subequations}
in $L^2\trkla{0,T;L^2_\per\trkla{\Omega}}$, where $d\in\tgkla{2,3}$ denotes the spatial dimension.
\end{lemma}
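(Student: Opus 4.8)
The plan is to combine the a priori bounds in \eqref{eq:bounds:cont} with weak compactness arguments and the Aubin--Lions--Simon lemma, and then to identify the weak limit of the nonlinear terms exactly as in the spatial passage of Lemma~\ref{lem:convergence:space}. First I would extract weak and weak-$*$ convergent subsequences. The uniform bounds place $\bs{d}\tpm$ in $L^\infty(0,T;H^1_\per(\Omega))$, $\bs{d}\tp$ in $L^2(0,T;H^2_\per(\Omega))$, $\bs{\mu}\tp$ in $L^2(0,T;L^2_\per(\Omega))$ (for fixed $\varepsilon$), $\bs{u}\tpm$ in $L^\infty(0,T;L^2_\per(\Omega))$, $\bs{u}\tp$ in $L^2(0,T;\Hdivpermean)$, and the time derivatives $\para{t}\bs{d}\tl$, $\para{t}\bs{u}\tl$ in the respective $L^{4/3}$ dual spaces. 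By reflexivity and the Banach--Alaoglu theorem I obtain, along a common subsequence, the weak-$*$ limits in the $L^\infty$ spaces and the weak limits in the $L^2$ and $W^{1,4/3}$ spaces listed in the statement; that the weak limit of $\para{t}\bs{d}\tl$ (resp. $\para{t}\bs{u}\tl$) is the distributional time derivative of the limit follows from closedness of the derivative.

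Second, I would establish strong convergence via the Aubin--Lions--Simon lemma applied to $\bs{d}\tl$, which, being a convex combination of the $\bs{d}\nn$, inherits the $L^2(0,T;H^2_\per(\Omega))$ bound, together with the bound on $\para{t}\bs{d}\tl$ in $L^{4/3}(0,T;(H^1_\per(\Omega))')$. With the compact embedding $H^2_\per(\Omega)\compemb W^{1,s}_\per(\Omega)$ for $s<\tfrac{2d}{d-2}$ (Rellich--Kondrachov) and the continuous embedding $W^{1,s}_\per(\Omega)\emb (H^1_\per(\Omega))'$, this yields $\bs{d}\tl\strongconv\bs{d}$ in $L^2(0,T;W^{1,s}_\per(\Omega))$. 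The analogous argument for $\bs{u}\tl$, using the $L^2(0,T;\Hdivpermean)$ bound, the time-derivative bound, and $H^1_\per(\Omega)\compemb L^s_\per(\Omega)$, gives $\bs{u}\tl\strongconv\bs{u}$ in $L^2(0,T;L^s_\per(\Omega))$.

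Third, I would transfer these strong limits to the piecewise-constant interpolants. The difference-quotient bounds in \eqref{eq:bounds:cont} yield $\norm{\bs{d}\tp-\bs{d}\tm}_{L^2 L^2}+\norm{\nabla\bs{d}\tp-\nabla\bs{d}\tm}_{L^2 L^2}\leq C\tau^{1/2}$ and $\norm{\bs{u}\tp-\bs{u}\tm}_{L^2 L^2}\leq C\tau^{1/2}$; since $\bs{d}\tl$ lies pointwise between $\bs{d}\tp$ and $\bs{d}\tm$, the same $O(\tau^{1/2})$ control holds for $\bs{d}\tp-\bs{d}\tl$ and $\nabla\bs{d}\tp-\nabla\bs{d}\tl$. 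Hence $\bs{d}\tp$ and $\bs{d}\tm$ share the strong limit $\bs{d}$ in $L^2(0,T;W^{1,s}_\per(\Omega))$, and interpolating this with the uniform $L^\infty(0,T;H^1_\per(\Omega))$ bound gives the stated strong convergence in $L^p(0,T;L^s_\per(\Omega))$ for every $p<\infty$; likewise $\bs{u}\tp\strongconv\bs{u}$ in $L^2(0,T;L^s_\per(\Omega))$, and all three interpolants share the same weak limit.

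Finally, the main obstacle is to identify the weak $L^2(0,T;L^2_\per(\Omega))$ limit of the nonlinear term. The energy bound already guarantees that it converges weakly to some $\mathcal{V}$; to identify $\mathcal{V}$ I would test against $\bs{\theta}\in C^\infty_\per(\overline{\Omega}\times[0,T])$, integrate the divergence terms by parts, and pass to the limit in each product separately, exactly as in Lemma~\ref{lem:convergence:space}. In $\bs{\mu}\tp\cdot\nabla\bs{d}\tp$ the factor $(\nabla\bs{d}\tp)\bs{\theta}$ converges strongly in $L^2(0,T;L^2_\per(\Omega))$ (using $\nabla\bs{d}\tp\strongconv\nabla\bs{d}$ in $L^2 L^s$ with $s$ close to $\tfrac{2d}{d-2}$ and the boundedness of $\bs{\theta}$), so its pairing with the weakly convergent $\bs{\mu}\tp$ passes to the limit; the terms $\bs{\mu}\tp\otimes\bs{d}\tp$ and $\bs{d}\tp\otimes\bs{\mu}\tp$ are treated the same way using the strong convergence of $\bs{d}\tp$. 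This weak-times-strong structure is precisely why the full $H^2$-regularity of $\bs{d}$, and hence the strong convergence of its gradient, is indispensable: it is the only point at which the mere weak convergence of $\bs{\mu}\tp$ would not suffice on its own.
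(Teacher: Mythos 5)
Your proposal is correct and follows essentially the same route as the paper, whose proof consists only of invoking the bounds \eqref{eq:bounds:cont} together with the Aubin--Lions lemma for the convergences and then repeating the weak-times-strong identification argument of Lemma \ref{lem:convergence:space} for the nonlinear term. Your write-up merely makes explicit the steps the paper leaves implicit (transfer from the affine to the piecewise-constant interpolants via the $O(\tau^{1/2})$ difference bounds, and the interpolation giving the $L^p\trkla{0,T;L^s_\per\trkla{\Omega}}$ convergence), all of which are carried out correctly.
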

\begin{proof}
The first convergence results are a direct consequence of the bounds \eqref{eq:bounds:cont} and the Aubin-Lions lemma. The last result can be shown following the lines of the proof of \eqref{eq:convergence:v} in Lemma \ref{lem:convergence:space}.
\end{proof}

Using these results, we may pass to the limit $\tau\searrow0$ in \eqref{eq:cont} which completes the proof of Theorem \ref{th:existence}.

\subsection*{Acknowledgment}
This work was supported by the NSF through grant number NSF-DMS 1759536.
\bibliographystyle{amsplain}

\providecommand{\bysame}{\leavevmode\hbox to3em{\hrulefill}\thinspace}
\providecommand{\MR}{\relax\ifhmode\unskip\space\fi MR }
\providecommand{\MRhref}[2]{%
  \href{http://www.ams.org/mathscinet-getitem?mr=#1}{#2}
}
\providecommand{\href}[2]{#2}

\end{document}